\documentclass[preprint,runningheads]{svjour3}

\let\vec\relax
\DeclareMathAccent{\vec}{\mathord}{letters}{"7E}

\usepackage{soul}
\usepackage{amsmath}
\usepackage{subcaption}
\usepackage{amsfonts}
\usepackage{amssymb}
\usepackage{mathtools}
\usepackage{mathrsfs,bbm}
\usepackage{xcolor}
\usepackage{lmodern}
\usepackage{hyperref}
\usepackage{longtable}

\usepackage{graphicx}
\usepackage{epstopdf}

\usepackage{bm}

\usepackage{algorithm}
\usepackage{algpseudocode}

\usepackage{accents}
\usepackage{cancel} 

\journalname{}
\date{ \phantom{b} \vspace{45mm}\phantom{e}}
\def\makeheadbox{\hspace{-4mm}Version of 5 May 2025}

\newcommand*{\dt}[1]{ \accentset{\mbox{\large\bfseries .}}{#1}}

\newtheorem{assumptions}{Assumptions}

\def\R{{\mathbb R}}

\def\torus{{\mathbb T}}
\def\eps{\varepsilon}

\def\wh{\widehat}
\def\wt{\widetilde}
\def\wt{\overline}

\def\h{{1/2}}
\usepackage{xfrac}

\newcommand\bfI{{\mathbf I}}
\newcommand\bfA{{\mathbf A}}

\newcommand\bfC{{\mathbf C}}
\newcommand\bfD{{\mathbf D}}

\newcommand\bfF{{\mathbf F}}
\newcommand\bfG{{\mathbf G}}
\newcommand\bfH{{\mathbf H}}
\newcommand\bfK{{\mathbf K}}
\newcommand\bfL{{\mathbf L}}
\newcommand\bfM{{\mathbf M}}
\newcommand\bfN{{\mathbf N}}
\newcommand\bfP{{\mathbf P}}

\newcommand\bfR{{\mathbf R}}
\newcommand\bfS{{\mathbf S}}
\newcommand\bfU{{\mathbf U}}
\newcommand\bfV{{\mathbf V}}

\newcommand\bfX{{\mathbf X}}
\newcommand\bfY{{\mathbf Y}}
\newcommand\bfZ{{\mathbf Z}}

\def\eps{\varepsilon}

\def\phi{\varphi}

\def\diag{\mbox{diag}}

\usepackage{enumitem}

\title{High-order robust basis-update \& Galerkin integrators for dynamical low-rank approximation}
\titlerunning{High-order robust BUG integrators for DLRA}
\date{\today}
\author{Cory Hauck, Jonas Kusch, Steffen Schotth\"ofer}

\institute{
  Cory Hauck \at Computer Science and Mathematics Division, Oak Ridge National Laboratory, Oak Ridge, TN 37980, USA
  \email{hauckcd@ornl.gov}
  \and
  J.Kusch \at Scientific Computing, Norwegian University of Life Sciences, Drøbakveien 31, 1433 Ås, Norway. \email{jonas.kusch1@gmail.com}        \\
  \and
  Steffen Schotth\"ofer \at Seamless Labs Inc., Mountain View, CA, USA
  \email{steffen@seamlesslabs.ai}
}

\begin{document}
	\maketitle
	\begin{abstract}  
    Dynamical low-rank approximation has become a widely used numerical method in diverse disciplines. Its main idea is to represent the matrix or tensor-valued solution to a time-dependent differential equation as a low-rank factorization. The evolution of the factorization leads to highly stiff dynamics and requires the derivation of novel time integration methods that are not prone to this stiffness. A promising family of integrators are basis-update \& Galerkin (BUG) integrators as they enable implicit time integration and structure--preservation. However, current BUG integrators are limited to second--order accuracy while general-order BUG integrators are designed as projections of explicit time integration methods, thus severely limiting their use.
    In this work, we propose general-order BUG integrators that do not rely on an explicit time integration scheme while requiring a smaller number of basis functions to achieve high-order accuracy. We prove a general order error bound for the proposed augmented and parallel BUG integrators and demonstrate their behaviour for a series of stiff and non-stiff numerical benchmarks in which they significantly outperform previous BUG versions.

	\keywords{dynamical low-rank approximation \and matrix  differential equations \and time integration}
	\subclass{65L05 \and 65L20 \and 65L70 \and 15A69}
\end{abstract}

\section{Introduction}
 Dynamical low-rank approximation (DLRA)~\cite{KochLubich07} has gained significant attention recently due to its potential to substantially reduce computational costs and memory requirements when solving high-dimensional differential equations. While DLRA is particularly efficient for high-dimensional tensor differential equations \cite{haegeman2016unifying,ceruti2023rank,sulz2024numerical,ceruti2024parallel}, it has also been successfully applied to matrix differential equations, where it has found applications across a wide range of disciplines including kinetic problems~\cite{einkemmer2024review}, chemical kinetics \cite{Jahnke2008,Prugger2023,Einkemmer2023a}, wave propagation \cite{Hochbruck2023,zhao2023low}, uncertainty quantification \cite{SaL09,babaee2017robust,FeL18,MuN18,MuNV20,patil2020real,kusch2021DLRUQ,kazashi2021existence,DoPNH23,ali2024dynamicallyAppl}, and machine learning \cite{schotthofer2022low,zangrando2023rank,schmidt2023rank,schotthofer2024federated,schotthoefer2024geolorageometricintegrationparameter}. 

Given a matrix differential equation of the form
\begin{align}\label{eq:origProb}
    \dot{\bfA}(t) = \bfF(t,\bfA(t))\,,
\end{align}
where $\bfA(t)\in\mathbb{R}^{m\times n}$, DLRA provides an approximate low-rank solution of the form $\bfX(t) =\bfU(t)\bfS(t)\bfV(t)^{\top}$.  Here $\bfS(t)\in\mathbb{R}^{r\times r}$, where $r\ll m,n$ is the approximation rank, and $\bfV(t)\in\mathbb{R}^{n\times r}$ and $\bfU(t)\in\mathbb{R}^{m\times r}$ are basis matrices with orthonormal columns. At its core, DLRA reduces computational complexity by restricting the evolution of $\bfX(t)$ to the manifold of low-rank matrices $\mathcal{M}$. This is achieved by defining the projector $\bfP(\bfX)$ onto the tangent space of $\mathcal{M}$ at $\bfX=\bfU\bfS\bfV^{\top}$:
\begin{align*}
    \bfP(\bfX)\bfZ = \bfU\bfU^{\top}\bfZ (\bfI - \bfV\bfV^{\top}) + \bfZ\bfV\bfV^{\top}\,.
\end{align*}
The low-rank approximation $\bfX(t)$ then solves the differential equation
\begin{align}\label{eq:proj_flow}
    \dot{\bfX}(t) = \bfP(\bfX(t))\bfF(t,\bfX(t))\,.
\end{align}
The key difficulty in solving this differential equation lies in the stiffness introduced by $\bfP$, which results from the high curvature of  $\mathcal{M}$. The stiffness introduced by this curvature is particularly severe when the smallest singular value of the factorization approaches zero, necessitating prohibitively small time steps for standard numerical schemes to maintain stability \cite{KochLubich07}.

To address this challenge, several robust low-rank integrators have been proposed. The projector--splitting (PS) integrator~\cite{LubichOseledets,KieriLubichWalach} and the basis-update \& Galerkin (BUG) integrators~\cite{CeL22,CeKL22,CeKL23,ceruti2024robust} are among the most widely used methods, along with projection-based approaches~\cite{KiV19,charous2023dynamically,SeCK23}. While the PS integrator has demonstrated reliable accuracy in many applications, its backward-in-time evolution substep can introduce instabilities, particularly in problems with parabolic character or strong scattering effects~\cite{kusch2021stability,einkemmer2024asymptotic}. Additionally, its analytically proven error bounds are limited to first order~\cite{KieriLubichWalach}. Projection-based approaches are commonly restricted to explicit time integration methods while exhibiting large error constants and lacking structure--preservation. The BUG integrators offer an alternative approach by updating the basis matrices in parallel to define a linear space in which local coefficients can be found, followed by a sequential update of this coefficient matrix. A key advantage of these methods is that the solution evolves only forward in time, while enabling straightforward extensions to structure preservation \cite{einkemmer2023conservation} and allowing for implicit substeps to update expansion coefficients to stabilize computations. Perhaps the most widely used robust integrator is the augmented BUG integrator \cite{ceruti2023rank}. Despite its flexibility that allows for an easy adaptation to particular problems, it requires a computationally expensive and sequential rank-$2r$ update step, which was later improved by the parallel integrator~\cite{CeKL23} to reduce costs by avoiding serial coefficient updates.

BUG integrators can be extended to higher orders by including high-order information in the basis. The augmented BUG integrator has been extended to second order in~\cite{ceruti2024robust}, providing a robust second-order error bound while preserving key structural properties. The increased order comes at the cost of necessitating the computation of projections of rank-$3r$ (or $4r$ for improved accuracy). As the authors of \cite{ceruti2024robust} point out, the core idea of their work can be used to derive further high-order versions of the BUG integrator. Such an extension has been proposed in \cite{nobile2025robust}. This extension relies on a coefficient update of the underlying $s$-stage Runge--Kutta method and thus limits the integrator to a specific explicit time integration scheme. This limits the integrator's use for stiff problems that frequently arise in scientific computing and thus suffers the same shortcomings as projection--based integrators such as \cite{KiV19}. Moreover, the integrator requires projections at rank $2rs$, which can become prohibitively expensive in most applications when $s$ is too large. An extension of the parallel integrator to second order has been proposed in \cite{kusch2024second}, which only requires the computation of projections at a rank of $2r$, similar to the first-order augmented BUG integrator.

In this work, we present high-order BUG integrators that are not limited by the aforementioned drawbacks. The main novelties of our work are:
\begin{itemize}
    \item[(i)] We derive novel BUG integrators that provably achieve order $p$ convergence under standard assumptions on the right-hand side. Unlike existing general order approaches \cite{KiV19,nobile2025robust}, these methods are not constrained by the explicit time integration scheme used for the basis update, enabling the use of general (e.g., implicit or exponential) time integration methods in the coefficient update. 
    \item[(ii)] The flexibility of using implicit coefficient updates allows the construction of efficient high-order integrators with augmentations following explicit low-storage schemes, significantly reducing the number of bases needed to achieve order $p$. As a result, implicit or exponential integrators can be employed in the coefficient updates with only a small number of required bases, rendering them computationally feasible. As an example, we derive an integrator for stiff problems of order six which only requires an implicit update for coefficient matrices of size $5r \times 5r$. 
    \item[(iii)] We derive conditions on the basis construction that allow for different strategies to obtain high-order BUG integrators. We focus on a Runge--Kutta-based strategy that, compared to \cite{nobile2025robust}, reduces the number of required augmentations by a factor of two for most time integration methods. 
    \item[(iv)] We validate our methods for stiff and non-stiff problems, showing improved efficiency and underlining that BUG integrators are not explicit methods, but can be used in combination with implicit schemes. 
\end{itemize}
 {The rest of the paper is structured as follows. First, we review arbitrary order DLRA integrators in Section~\ref{sec:RK-BUG}. In Section~\ref{sec:high_order_bug}, we present the construction of arbitrary order BUG integrators together with a robust error bound. Sections~\ref{sec:error_bound_abug} and \ref{sec:error_bound_pbug} provide the proofs of this error bound for the augmented BUG and the parallel BUG integrator. Implementation details of both integrators are provided in Sections~\ref{sec:impl_bugp} and \ref{sec:impl_parallel_bugp}. Lastly, we provide numerical examples in Section~\ref{sec:numerics} followed by a conclusion in Section~\ref{sec:summary_conculsion}. To facilitate the overview of notation used throughout the manuscript, we have added a notation table \ref{tab:notation} in the appendix. }

\section{Low-rank Runge-Kutta integrators}\label{sec:RK-BUG}

A first DLRA integrator that achieves general order is the projected Runge--Kutta method proposed in \cite{KiV19}. This integrator uses a Butcher Tableau with coefficients $a_{\ell, \ell'}, c_{\ell}$, and $b_{\ell}$, where $1 \leq \ell$ and $\ell' \leq \ell$. Denote by $[\![\bfZ]\!]$ the truncation of $\bfZ \in\mathbb{R}^{m\times n}$ to rank $r$. Then, the projected Runge--Kutta stages of an explicit $s$-stage Runge--Kutta method are defined as 
\begin{subequations}\label{eq:projRK}
\begin{align}
    t_{(\ell)} &= t_k + c_{\ell} h\,,
    \\
    \bfY_{(\ell)} 
    &= [\![\bfY_k + h\sum_{\ell' = 1}^{\ell-1}a_{\ell,\ell'}\bar \bfZ_{\ell'}]\!] 
    =:\bfU_{(\ell)}\bfS_{(\ell)}\bfV_{(\ell)}^{\top}\,,\label{eq:projRK_rksub}
    \\
    \bfF_{(\ell)} &= \bfF(t_{(\ell)}, \bfY_{(\ell)}) \,,
    \\
    \bar\bfZ_{\ell} &= \bfP(\bfY_{(\ell)})\bfF_{(\ell)}\,.
\end{align}
Here $\bar\bfZ_{\ell}$ denotes the projected low-rank intermediate Runge-Kutta solution at stage $\ell$.
\end{subequations}
The value of $\bfY_{k+1}$ is then assembled from these stages as
\begin{equation}
    \bfY_{k+1} = \bfY_{k} + h \sum_{\ell=1}^s b_\ell \bar\bfZ_{\ell}\,.
\end{equation}
This formulation is inefficient as it requires computing full matrices of the Runge-Kutta substeps $\bfY_{(\ell)}$ and the solutions $\bfY_{k}$ and $\bfY_{k+1}$. However, a common approach to ensure that all above steps can be performed in a low-rank format is to note that $\bfY_{(\ell)}$ is spanned by 
\begin{subequations}
    \begin{align*}
        \widehat\bfU_{(\ell)}^{\mathrm{RK}} :=\,& \text{orth}([\bfU_0, \,a_{\ell,1}\bfF_{(1)}\bfV_{(1)}, \,\cdots \,,\,a_{\ell,\ell-1}\bfU_{(\ell-1)}, \,a_{\ell,\ell-1}\bfF_{(\ell-1)}\bfV_{(\ell-1)}])\,,\\
        \widehat\bfV_{(\ell)}^{\mathrm{RK}} :=\,& \text{orth}([\bfV_0, \,a_{\ell,1}\bfF_{(1)}^{\top}\bfU_{(1)}, \,\cdots \,,\,a_{\ell,\ell-1}\bfV_{(\ell-1)}, \,a_{\ell,\ell-1}\bfF_{(\ell-1)}^{\top}\bfU_{(\ell-1)}])\,,
    \end{align*}
\end{subequations}
due to the structure of the tangent space which ensures that the range of $\bfU_{(\ell')}$ and $\bfF_{(\ell')}\bfV_{(\ell')}$ spans $\bar \bfZ_{\ell'}$ in \eqref{eq:projRK_rksub}. Further, note that if $a_{\ell,k} \equiv 0$, the corresponding entries can be left out. Thus, the corresponding expansion coefficients can be obtained through
\begin{align*}
    \widehat\bfS_{(\ell)}^{\mathrm{RK}} = \widehat\bfU_{(\ell)}^{\mathrm{RK},\top} \left( \bfY_k + h\sum_{\ell' = 1}^{\ell-1}a_{\ell,\ell'}\bar \bfZ_{\ell'}\right) \widehat\bfV_{(\ell)}^{\mathrm{RK}}\,.
\end{align*}
 {The update of $\bfY_{k+1}$ can be computed efficiently through the same strategy.} This integrator suffers two main drawbacks. First, and perhaps most importantly, the integrator is inherently explicit. I.e., allowable time step sizes must be chosen according to an explicit time step restriction, limiting its use in a large variety of applications. Second, the integrator violates underlying solution structures such as conservation. BUG integrators have been proposed to tackle these two challenges, where common BUG integrators can be used with general (e.g., implicit or exponential) time integration methods to enable their use in stiff settings \cite{CeL22,CeKL22,CeKL23,ceruti2024robust,kusch2024second} while augmented BUG integrators posses several conservation properties, e.g., for Schr\"odinger equations or gradient flows \cite{CeKL22} as well as local conservation laws for kinetic problems \cite{einkemmer2023conservation}. Note that high-order BUG integrators have been proposed in \cite{ceruti2024robust,kusch2024second}  {(see Appendix~ \ref{app:recap})}, which were later extended to ensure conservation for kinetic problems in \cite{einkemmer2023conservation}. A recently proposed BUG-type integrator that achieves general order as well as conservation for kinetic equations is the Runge--Kutta BUG integrator of \cite{nobile2025robust}. To achieve general order, it uses basis matrices 
\begin{subequations}
    \begin{align}
        \widehat\bfU_{\mathrm{RK}} :=\,& \text{orth}([\bfU_0, \,\beta_1\bfF_{(1)}\bfV_{(1)},\,\beta_2\bfU_{(2)}, \,\beta_2\bfF_{(2)}\bfV_{(2)}, \,\cdots \,,\,\beta_s\bfU_{(s)},\,\beta_s\bfF_{(s)}\bfV_{(s)}])\,,\\
        \widehat\bfV_{\mathrm{RK}} :=\,& \text{orth}([\bfV_0, \,\beta_1\bfF_{(1)}^{\top}\bfU_{(1)}, \,\beta_2\bfV_{(2)},\,\beta_2\bfF_{(2)}^{\top}\bfU_{(2)}, \,\cdots\,, \,\beta_s\bfV_{(s)}, \,\beta_s\bfF_{(s)}^{\top}\bfU_{(s)}])\,,
    \end{align}
\end{subequations}
where $\beta_i = 1$ if $b_i \neq 0$ and $\beta_i = 0$ if $b_i = 0$. Thus the method proposed in \cite{nobile2025robust} requires $2rs_*$ basis vectors, where $s_* \leq s$ is the number of non-zero values of $\beta_i$. For many conventional Runge--Kutta methods $ s_*=s$.  However, there are methods for which $s_* < s$.  For example, $s_* = s-1$ for Heun's third-order method and for the explicit midpoint method, while $s_*=s-6$ for Ketcheson's $10$-stage, $4\textsuperscript{th}$-order SSP-RK method \cite{Ketcheson2008}. 
The $S$-step is then defined as the coefficient update of the $s$-stage Runge--Kutta method
\begin{align}
    \wh \bfS_{\mathrm{RK}} = \widehat\bfU_{\mathrm{RK}}^{\top}\left(\bfY_0 + h\sum_{\ell = 1}^{s}b_{\ell}\bfF_{(\ell)}\right)\widehat\bfV_{\mathrm{RK}}\,.
\end{align}
{While this method is formally of order $p$, the large number of bases vectors (or columns) in $ \widehat\bfU_{\mathrm{RK}}$ and $ \widehat\bfV_{\mathrm{RK}}$ adds significant computational costs. Furthermore, since the final coefficient update of Runge--Kutta BUG integrators is a projection of the underlying explicit projected Runge--Kutta method, they are not applicable in stiff scenarios that require specialized (e.g., implicit or exponential) time integration methods. Thus, they violate one key advantage of classical BUG integrators.}

\section{High-order BUG integrators} \label{sec:high_order_bug}
In the following, we extend both the parallel \cite{CeKL23} and the augmented BUG integrators \cite{ceruti2023rank} to arbitrary order for a matrix ODE \eqref{eq:origProb}. Compared to existing low-rank Runge-Kutta integrators \cite{KiV19,nobile2025robust}, the proposed methods do not rely on an underlying Runge-Kutta method and are thus applicable to stiff problems. Moreover, compared to \cite{nobile2025robust}, they require smaller basis matrices. The integrators derived are natural extensions of the midpoint BUG integrator \cite{ceruti2024robust} and second-order parallel integrator \cite{kusch2024second} in that they generalize to those methods for order $p=2$. 
To provide analytic insight into the construction of these integrators, we make the standard assumptions that 
\begin{assumptions}\label{as:assumptions}
The right-hand side and the initial condition fulfill the following properties for the Frobenius norm $\Vert\cdot \Vert$:
\begin{enumerate}[label=A\arabic*, ref=A\arabic*]
	\item $\bfF$ is Lipschitz-continuous, i.e., for all $\bfZ, \widetilde{\bfZ} \in \mathbb{R}^{m \times n}$ and $0\le t \le T$,
    $$
    \| \bfF(t, \bfZ) - \bfF(t, \widetilde{\bfZ}) \| 
    \leq
    L \| \bfZ - \widetilde{\bfZ} \|.
    $$	\label{ass:Lipschitz}
    \item $\bfF$ is bounded, i.e., for all $\bfZ \in \mathbb{R}^{m \times n}$ and $0\le t \le T$, it holds $\| \bfF(t, \bfZ) \| \leq B$.\label{ass:bounded}
    \item Let $\mathcal{M}_r$ be the manifold of rank $r$ matrices, where $r\in\mathbb{N}$ is arbitrary. 
    For $\bfY\in\mathcal{M}_r$, the normal part of $\bfF(t, \bfY)$, that is $\bfR(t, \bfY) := (\bfI - 
    \bfP(\bfY))\bfF(t, \bfY)$ is $\varepsilon_r$-small, i.e., $\| \bfR(t, \bfY) \| \leq \varepsilon_r$. 
    \label{ass:epsilon}
    \item
    The error in the initial value is $\delta$-small, i.e., $\| \bfY_0 - \bfA_0 \| \le \delta$.
     \label{ass:IC}
\end{enumerate}	
\end{assumptions}

\subsection{Augmented BUG integrators}\label{sec:BUG-p-intro} To construct a general-order augmented BUG integrator, we first design the bases $\wt \bfU, \wt \bfV$ which are then used in a sequential Galerkin step to update expansion coefficients. In the following, we assume a time interval $t\in[t_n,t_{n+1}]$ and a numerical solution $\bfY_n = \bfU_n\bfS_n\bfV_n^{\top}$. {Two properties are important in the construction of basis matrices for the BUG method. 
\begin{enumerate}[label=P\arabic*, ref=P\arabic*]
    \item The basis spans the solution at the previous time step, i.e., $\wt \bfU\wt \bfU^{\top} \bfY_n \wt \bfV\wt \bfV^{\top} = \bfY_n$. \label{pr:ic_basis}
    \item For $\dt\bfA = \bfF(t,\bfA(t))$ with initial condition $\bfA(t_n) = \bfY_n$ the basis fulfills the consistency relation \label{pr:consistency_basis}
\begin{align}\label{eq:consistency_relation}
    \wt \bfU\wt \bfU^{\top} \bfA(t_{n+1}) \wt \bfV\wt \bfV^{\top} = \bfA(t_{n+1}) + O(h^{p+1} + h\varepsilon_r)\,.
\end{align}
\end{enumerate}
Given any basis fulfilling the above properties, the order $p$ augmented BUG integrator then solves
		\begin{equation}\label{bar-S-step} 
		\dt{\wt\bfS}(t) =  \wt\bfU^\top \bfF(t, \wt\bfU \, \wt\bfS(t) \wt\bfV^\top) \wt\bfV, 
		\qquad \wt\bfS(t_n) = \wt\bfM \bfS_n \wt\bfN^\top\,,
		\end{equation}
where $\wt \bfM=\wt \bfU^\top \bfU_n$ and $\wt\bfN=\wt \bfV^\top \bfV_n$. The time-updated solution is then given as $\bfY_{n+1} = [\![ \wt \bfU\bfS(t_{n+1})\wt \bfV^{\top}]\!]_{\vartheta}$, where $[\![ \cdot]\!]_{\vartheta}$ denotes the truncation with respect to a user-determined tolerance parameter $\vartheta$, following \cite{ceruti2023rank}. A detailed implementation of the augmented BUG integrator can be found in Section~\ref{sec:impl_bugp}. While Property~\ref{pr:ic_basis} is especially relevant for conservation, Property~\ref{pr:consistency_basis} is required to derive a robust error bound of the integrator. The consistency relation \ref{pr:consistency_basis} can be achieved through a large number of design choices. In the following, we focus on one of these.
} 
Given the Runge-Kutta substeps \eqref{eq:projRK}, we can define
\begin{subequations}\label{eq:augBasisGeneral}
\begin{align}
    \widehat\bfU_{(j)} :=\,& \text{orth}([\bfU_0, \,\bfF_{(1)}\bfV_{(1)}, \,\bfF_{(2)}\bfV_{(2)}, \,\cdots \,, \,\bfF_{(j)}\bfV_{(j)}])\,,\label{eq:augBasisGeneralU} \\ 
    \widehat\bfV_{(j)} :=\,& \text{orth}([\bfV_0, \,\bfF_{(1)}^{\top}\bfU_{(1)}, \,\bfF_{(2)}^{\top}\bfU_{(2)}, \,\cdots\,, \,\bfF_{(j)}^{\top}\bfU_{(j)}])\,. \label{eq:augBasisGeneralV}
\end{align}
\end{subequations}
Then, assume that an $s$-stage explicit Runge--Kutta method with order $p$ exists and has Butcher coefficients $c_i, b_i$, and $a_{ij}$ where $i,j\in\{1,\cdots, s\}$. An order $p$ augmented BUG integrator can be constructed with the bases $\wt\bfU := \wh \bfU_{(s)}$ and $\wt\bfV := \wh \bfV_{(s)}$, where $p=s=2$ resembles the rank-$4r$ BUG integrator based on the trapezoidal rule \cite[Remark~2]{ceruti2024robust}. We wish to mention that such an extension to general order $p$ was hinted at in the same remark. Compared to the RK-BUG integrator, the basis information $\beta_{\ell}\bfU_{(\ell)}$ and $\beta_{\ell}\bfV_{(\ell)}$ is not needed in our definition of the augmented bases, thus leading to a basis of rank $(s+1)r$. The reason for this is given in the following lemma:
\begin{lemma}[Basis information]\label{le:approxU}
    $\bfU_{(\ell)}$ is spanned by $\widehat \bfU_{(j-1)}$ and $\bfV_{(\ell)}$ is spanned by $\widehat \bfV_{(j-1)}$ for $\ell \leq j$.
\end{lemma}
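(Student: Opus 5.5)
The proof is an induction on the stage index $\ell$, built on one observation: rank-$r$ truncation does not enlarge the column or row space. If $\bfZ=\bfU_{\bfZ}\mathbf{\Sigma}\bfV_{\bfZ}^\top$ is an SVD, then $[\![\bfZ]\!]=\bfU_{(r)}\mathbf{\Sigma}_{(r)}\bfV_{(r)}^\top$ keeps only left singular vectors belonging to nonzero singular values. Each such vector equals $\bfZ\bfv/\sigma$ and so lies in $\mathrm{range}(\bfZ)$. Hence the factor $\bfU_{(\ell)}$ in \eqref{eq:projRK_rksub} is spanned by any basis of the column range of $\bfY_k+h\sum_{\ell'<\ell}a_{\ell,\ell'}\bar\bfZ_{\ell'}$, and $\bfV_{(\ell)}$ by any basis of its row range.

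If the argument of the truncation has rank below $r$, the extra columns of $\bfU_{(\ell)}$ are not determined by the SVD. I will adopt the convention that they are chosen inside the range (or with zero coefficients in $\bfS_{(\ell)}$). This is the only delicate point, and I will state it explicitly in the proof.

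The second ingredient is the tangent-space structure. Applying $\bfP(\bfY_{(\ell')})$ to $\bfF_{(\ell')}$ gives
\[
\bar\bfZ_{\ell'}=\bfU_{(\ell')}\bfU_{(\ell')}^\top\bfF_{(\ell')}(\bfI-\bfV_{(\ell')}\bfV_{(\ell')}^\top)+\bfF_{(\ell')}\bfV_{(\ell')}\bfV_{(\ell')}^\top .
\]
Its column range therefore lies in $\mathrm{span}[\bfU_{(\ell')},\bfF_{(\ell')}\bfV_{(\ell')}]$. By the symmetric form of the projector, its row range lies in $\mathrm{span}[\bfV_{(\ell')},\bfF_{(\ell')}^\top\bfU_{(\ell')}]$.

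The induction claim is that $\mathrm{range}(\bfU_{(\ell)})\subseteq\mathrm{range}(\widehat\bfU_{(\ell-1)})$ for every $\ell\ge1$, with $\widehat\bfU_{(0)}:=\bfU_0$.
\begin{itemize}
\item Base case $\ell=1$: $\bfY_{(1)}=[\![\bfY_k]\!]$ with $\bfY_k=\bfU_0\bfS_0\bfV_0^\top$, so $\bfU_{(1)}$ lies in $\mathrm{span}(\bfU_0)$.
\item Inductive step: for $\ell'<\ell$, the hypothesis gives $\bfU_{(\ell')}\subseteq\widehat\bfU_{(\ell'-1)}\subseteq\widehat\bfU_{(\ell-1)}$. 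Also $\bfF_{(\ell')}\bfV_{(\ell')}$ is a block of $\widehat\bfU_{(\ell-1)}$ by \eqref{eq:augBasisGeneralU}. The column range of $\bfY_k+h\sum_{\ell'<\ell}a_{\ell,\ell'}\bar\bfZ_{\ell'}$ is contained in $\mathrm{span}(\bfU_0)$ plus the column ranges of the $\bar\bfZ_{\ell'}$. All of these lie in $\widehat\bfU_{(\ell-1)}$, and the truncation observation then gives the claim for $\bfU_{(\ell)}$.
\end{itemize}

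Finally, the spaces $\mathrm{range}(\widehat\bfU_{(j)})$ are nested in $j$. So for $\ell\le j$ we get $\mathrm{range}(\bfU_{(\ell)})\subseteq\mathrm{range}(\widehat\bfU_{(\ell-1)})\subseteq\mathrm{range}(\widehat\bfU_{(j-1)})$, which is the statement.

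The claim for $\bfV_{(\ell)}$ follows by the identical argument applied to transposes, using the row-range inclusion for $\bar\bfZ_{\ell'}$ and \eqref{eq:augBasisGeneralV}. The main obstacle is not the algebra but the rank-deficient truncation issue above. It does not affect $\bfY_{(\ell)}$ itself, since only the factor representation is ambiguous.
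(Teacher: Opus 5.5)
Your proof is correct and follows essentially the same route as the paper: induction over the stage index, using that the column range of $\bar\bfZ_{\ell'}=\bfP(\bfY_{(\ell')})\bfF_{(\ell')}$ lies in $\mathrm{span}[\bfU_{(\ell')},\bfF_{(\ell')}\bfV_{(\ell')}]$, absorbing $\bfU_{(\ell')}$ into the augmented basis via the induction hypothesis, and concluding with the nestedness of the $\widehat\bfU_{(j)}$. Your observation that truncation cannot enlarge the range justifies a step the paper only asserts (its chain writes equalities of spans where only inclusions hold); in the rank-deficient case, take the padding columns inside $\mathrm{range}(\widehat\bfU_{(\ell-1)})$, which has dimension at least $r$ because it contains $\bfU_0$, since merely assigning them zero coefficients in $\bfS_{(\ell)}$ would not place them in the span.
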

\begin{proof}
We use induction to show this result holds for general $j\in\mathbb{N}$. The statement is true for $j=1$ since with $\widehat \bfU_{(0)} \equiv \bfU_{0}$ we have
\begin{align*}
    \bfU_{(1)} = \bfU_{0} \in \text{span}(\widehat \bfU_{(0)})\,.
\end{align*}
Let the statement hold for an arbitrary but fixed $j$ (IH). Then, since
\begin{align}
    \bfY_{(j+1)}:= \bfU_{(j+1)}\bfS_{(j+1)}\bfV_{(j+1)}^{\top} := [\![\bfY_0 + h\sum_{\ell = 1}^{j}a_{j,\ell}\bar \bfZ_{\ell}]\!]\,,
\end{align}
we know that 
\begin{align*}
    \bfU_{(j+1)} \in\,& \text{span}\{ \bfU_0, \bar \bfZ_1, \cdots,  \bar \bfZ_j\}\,\\
    =\,& \text{span}\{ \bfU_0, \bfF_{(1)}\bfV_{(1)},  \bfU_{(2)}, \bfF_{(2)}\bfV_{(2)}, \cdots,  \bfU_{(j)}, \bfF_{(j)}\bfV_{(j)}\}\,\\
    \stackrel{\mathrm{(IH)}}{=}\,& \text{span}\{ \widehat \bfU_{(j-1)}, \bfF_{(1)}\bfV_{(1)}, \cdots,  \bfF_{(j)}\bfV_{(j)}\}\\
    \stackrel{\mathrm{\eqref{eq:augBasisGeneralU}}}{=}\,& \text{span}\{ \widehat \bfU_{(j-1)}, \bfF_{(j)}\bfV_{(j)}\}\\
    \stackrel{\mathrm{\eqref{eq:augBasisGeneralU}}}{=}\,& \text{span}\{ \widehat \bfU_{(j)}\}\,.
\end{align*}
Showing that $\bfU_{(\ell)}\in \text{span}\{ \widehat \bfU_{(j)}\}$ for $\ell\leq j$ directly follows from the discussion above. The proof for $\bfV_{(\ell)}$ follows analogously.\qed
\end{proof}
Given this lemma, the consistency relation \eqref{eq:consistency_relation} directly follows:
\begin{lemma}
    Given Assumptions~\ref{as:assumptions} and $\bfA(t_n) = \bfY_n$, the bases $\wt\bfU := \wh \bfU_{(s)}$ and $\wt\bfV := \wh \bfV_{(s)}$ fulfill
    \begin{align*}
        \Vert\wt \bfU\wt \bfU^{\top} \bfA(t_{n+1}) \wt \bfV\wt \bfV^{\top} - \bfA(t_{n+1})\Vert \leq c_1 h^{p+1} + c_2 h\varepsilon_r\,,
    \end{align*}
    where constants only depend on the Lipschitz constant, the bound of $\bfF$, and a bound of {the p-th order} derivatives of the exact solution.
\end{lemma}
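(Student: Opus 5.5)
Throughout, I write $\bfP_{\wt\bfU} := \wt\bfU\wt\bfU^\top$ and $\bfP_{\wt\bfV} := \wt\bfV\wt\bfV^\top$. The plan is to compare $\bfA(t_{n+1})$ with the full (unprojected) output of the projected Runge--Kutta method \eqref{eq:projRK} started from $\bfY_n$. Write
\begin{align*}
\bfY^{\mathrm{RK}}_{n+1} := \bfY_n + h\sum_{\ell=1}^s b_\ell \bar\bfZ_\ell .
\end{align*}
The first step is to show that this matrix is exactly reproduced by the projection,
\begin{align*}
\bfP_{\wt\bfU}\, \bfY^{\mathrm{RK}}_{n+1}\, \bfP_{\wt\bfV} = \bfY^{\mathrm{RK}}_{n+1}.
\end{align*}
This follows from Lemma~\ref{le:approxU}. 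Each $\bar\bfZ_\ell = \bfP(\bfY_{(\ell)})\bfF_{(\ell)}$ lies in the tangent space at $\bfY_{(\ell)}$. Hence its column space is contained in $\mathrm{span}\{\bfU_{(\ell)}, \bfF_{(\ell)}\bfV_{(\ell)}\}$ and its row space in $\mathrm{span}\{\bfV_{(\ell)}, \bfF_{(\ell)}^\top\bfU_{(\ell)}\}$. By the lemma and the definition \eqref{eq:augBasisGeneral}, both spaces are contained in the ranges of $\wh\bfU_{(s)}$ and $\wh\bfV_{(s)}$ for all $\ell\le s$. The term $\bfY_n = \bfU_n\bfS_n\bfV_n^\top$ is spanned because $\bfU_0=\bfU_n$ and $\bfV_0=\bfV_n$ are included. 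Consequently the quantity to bound equals
\begin{align*}
\bigl(\bfP_{\wt\bfU}\,\cdot\,\bfP_{\wt\bfV} - \mathrm{Id}\bigr)\bigl(\bfA(t_{n+1}) - \bfY^{\mathrm{RK}}_{n+1}\bigr).
\end{align*}
Since orthogonal projections are contractive in the Frobenius norm, this gives
\begin{align*}
\Vert \bfP_{\wt\bfU}\bfA(t_{n+1})\bfP_{\wt\bfV} - \bfA(t_{n+1})\Vert \le 2\,\Vert \bfA(t_{n+1}) - \bfY^{\mathrm{RK}}_{n+1}\Vert .
\end{align*}

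The second step is to bound the one-step error $\Vert \bfA(t_{n+1}) - \bfY^{\mathrm{RK}}_{n+1}\Vert$ of the projected RK method by $c_1h^{p+1}+c_2h\varepsilon_r$. I split it with the exact (unprojected, untruncated) RK step $\bfA^{\mathrm{RK}}_{n+1} = \bfY_n + h\sum_\ell b_\ell \bfK_\ell$, where the stages $\bfK_\ell = \bfF(t_{(\ell)}, \bfA_{(\ell)})$ use $\bfA_{(\ell)} = \bfY_n + h\sum_{\ell'<\ell} a_{\ell\ell'}\bfK_{\ell'}$. The classical order-$p$ local truncation error gives $\Vert\bfA(t_{n+1}) - \bfA^{\mathrm{RK}}_{n+1}\Vert \le c h^{p+1}$, with a constant depending on bounds of the derivatives of the exact solution and of $\bfF$. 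For the remaining difference I prove by induction over the stages that
\begin{align*}
\Vert \bfY_{(\ell)} - \bfA_{(\ell)}\Vert \le C(h^{p+1} + h\varepsilon_r), \qquad \Vert \bar\bfZ_\ell - \bfK_\ell\Vert \le C'(h^{p} + \varepsilon_r).
\end{align*}
Multiplying the second bound by $h$ and summing with the weights $b_\ell$ then gives the claim.

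The main obstacle is the truncation $[\![\cdot]\!]$ inside the stages. The stage error is
\begin{align*}
\bar\bfZ_\ell - \bfK_\ell = -\bfR(t_{(\ell)},\bfY_{(\ell)}) + \bfF(t_{(\ell)},\bfY_{(\ell)}) - \bfF(t_{(\ell)},\bfA_{(\ell)}).
\end{align*}
By \ref{ass:epsilon} and \ref{ass:Lipschitz} this is bounded by $\varepsilon_r + L\Vert\bfY_{(\ell)}-\bfA_{(\ell)}\Vert$. However, $\bfY_{(\ell)}$ is the rank-$r$ truncation of $\bfY_n + h\sum a_{\ell\ell'}\bar\bfZ_{\ell'}$, so I also need the truncation error itself to be $O(h\varepsilon_r + h^{p+1})$. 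I plan to handle this as follows. Since $\bfA_{(\ell)}$ is generally not rank $r$, I compare with the rank-$r$ matrix $\bfW$ obtained by flowing the projected flow \eqref{eq:proj_flow} from $\bfY_n$ to time $t_{(\ell)}$, using the quasi-optimality $\Vert \bfZ-[\![\bfZ]\!]\Vert \le \Vert\bfZ-\bfW\Vert$. The flow $\bfW$ differs from the exact flow by $O(h\varepsilon_r)$, and the exact flow differs from the stage argument by at most $O(h^{2})$ at low stage order. So this step naively gives only $O(h^2)$ and is the delicate point. If it fails, I would fall back on the fact that the stage values need only be accurate to the stage order that the RK order conditions actually use. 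Alternatively I would replace $\bfA^{\mathrm{RK}}$ by the RK method applied to the projected vector field, whose exact solution is $\varepsilon_r$-close to $\bfA$ by a Gronwall argument, and accept the resulting constants.
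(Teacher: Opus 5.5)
Your first step is the same as the paper's proof. Lemma~\ref{le:approxU}, together with the tangent-space structure of $\bar\bfZ_\ell$, shows that $\wt\bfU,\wt\bfV$ reproduce $\bfY^{\mathrm{RK}}_{n+1}=\bfY_n+h\sum_\ell b_\ell\bar\bfZ_\ell$ exactly. This reduces everything to the robust local error bound $\Vert\bfA(t_{n+1})-\bfY^{\mathrm{RK}}_{n+1}\Vert\le c_1h^{p+1}+c_2h\varepsilon_r$ for the projected Runge--Kutta method. The paper does not derive that bound; it takes it from Theorem~6 of \cite{KiV19}.

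Your second step tries to prove it from scratch, and that is a genuine gap. The induction hypotheses $\Vert\bfY_{(\ell)}-\bfA_{(\ell)}\Vert\le C(h^{p+1}+h\varepsilon_r)$ and $\Vert\bar\bfZ_\ell-\bfK_\ell\Vert\le C'(h^p+\varepsilon_r)$ are not just hard to establish; they are false. The first fails for every $p\ge2$ and the second for every $p\ge3$, for any method with $c_2\neq0$.

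Here is a counterexample. Take $r=1$, $\bfY_n=\sigma\mathbf{e}_1\mathbf{e}_1^\top$ with fixed $\sigma>0$, and $\bfF(t,\bfY)=\chi(\Vert\bfY\Vert)(\bfC\bfY+\bfY\bfC^\top)$. Here $\bfC=\mathbf{e}_1\mathbf{e}_2^\top+\mathbf{e}_2\mathbf{e}_1^\top$ and $\chi$ is a scalar cut-off equal to one near $\bfY_n$.
\begin{itemize}
\item This field is bounded, Lipschitz and tangent to $\mathcal{M}_1$ at every rank-one matrix. So \ref{ass:epsilon} holds with $\varepsilon_r=0$, and $\bar\bfZ_1=\bfK_1$.
\item Hence the untruncated second stage is $\bfA_{(2)}=\sigma\bigl(\mathbf{e}_1\mathbf{e}_1^\top+c_2h(\mathbf{e}_1\mathbf{e}_2^\top+\mathbf{e}_2\mathbf{e}_1^\top)\bigr)$. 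Its second singular value is $\sigma c_2^2h^2+O(h^4)$.
\item Truncating gives $\bfY_{(2)}-\bfA_{(2)}=\sigma c_2^2h^2\,\mathbf{e}_2\mathbf{e}_2^\top+O(h^3)$.
\item Consequently $\bar\bfZ_2-\bfK_2=\bfC(\bfY_{(2)}-\bfA_{(2)})+(\bfY_{(2)}-\bfA_{(2)})\bfC^\top$ has norm $\sqrt2\,\sigma c_2^2h^2+O(h^3)$.
\end{itemize}

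This is not an artefact of the example. Explicit Runge--Kutta methods have stage order one, so internal stages sit $O(h^2+h\varepsilon_r)$ away from $\mathcal{M}_r$, and the stage truncations in general remove perturbations of that size. A stage-by-stage norm induction therefore gives at best $\Vert\bfA(t_{n+1})-\bfY^{\mathrm{RK}}_{n+1}\Vert=O(h^3+h\varepsilon_r)$, i.e.\ order two.

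Any order-$p$ argument must instead exploit cancellation of these truncation errors inside $h\sum_\ell b_\ell\bar\bfZ_\ell$, which per-stage norm bounds cannot see. In the example one checks that, to leading order, stage $\ell$ is shifted by $\gamma_\ell\,\sigma h^2\mathbf{e}_2\mathbf{e}_2^\top$ with $\gamma_\ell=c_\ell^2-2\sum_j a_{\ell j}c_j$. So the $h^3$ term of $\bfY^{\mathrm{RK}}_{n+1}-\bfA^{\mathrm{RK}}_{n+1}$ is proportional to $\sum_\ell b_\ell c_\ell^2-2\sum_{\ell,j}b_\ell a_{\ell j}c_j$, which vanishes by the third-order conditions. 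However, the underlying expansion of the truncated SVD is only valid for $h$ small relative to $\sigma_r$, so this is not a robust argument.

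Neither of your fallbacks closes the gap:
\begin{itemize}
\item The first, that stage values only need the accuracy the order conditions use, restates this cancellation without proving it. The truncation perturbations are not the stage defects of the underlying Runge--Kutta method, so the order conditions do not cancel them automatically.
\item The second views the scheme as a Runge--Kutta method for the projected field, extended off the manifold by $\bfX\mapsto\bfP([\![\bfX]\!])\bfF(t,[\![\bfX]\!])$. This does yield order $p$, but with constants involving derivatives of $\bfP(\cdot)$ and of the truncation map. Those scale with the curvature, roughly $1/\sigma_r$, which the lemma explicitly excludes.
\end{itemize}

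To complete the proof along the paper's route, replace your second step by the robust local error estimate for projected Runge--Kutta methods from \cite{KiV19}, or give a curvature-independent proof of it. Your first step then finishes the argument.
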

\begin{proof}
From \cite[Theorem~6]{KiV19} we know that the solution of the projected Runge--Kutta method $\bfY^{n+1}_{\mathrm{PRK}} = \bfY_n + h\sum_{i=1}^s b_i \bar \bfZ_i$ fulfills with $\mu=h(h^p+\eps_r)$ the local error bound
$$ 
\bfA(t_{n+1})-\bfY^{n+1}_{\mathrm{PRK}} = O(\mu).
$$
Due to Lemma~\ref{le:approxU}, the chosen augmented bases $\wt \bfU$ and $\wt \bfV$ span both $\bfU_{(i)}$, $\bfF_{(i)}\bfV_{(i)}$ and $\bfV_{(i)}$, $\bfF_{(i)}^\top\bfU_{(i)}$ for $i\leq s$. Hence,
$$
(\bfI-\wt \bfU \, \wt \bfU^\top) \bfY^{n+1}_{\mathrm{PRK}} = 0 \quad\hbox{ and }\quad 
\bfY^{n+1}_{\mathrm{PRK}} (\bfI-\wt \bfV \,\wt \bfV^\top) =0
$$
and thus $(\bfI-\wt \bfU \, \wt \bfU^\top)\bfA(t_{n+1}) =  (\bfI-\wt \bfU \, \wt \bfU^\top)(\bfA(t_{n+1})-\bfY^{n+1}_{\mathrm{PRK}}) = O(\mu)$. Analogously, we obtain $\bfA(t_{n+1})(\bfI-\wt \bfV \, \wt \bfV^\top) = O(\mu)$ and therefore
\begin{align*}
    \,&\bfA(t_{n+1}) - \wt \bfU \,\wt \bfU^\top \bfA(t_{n+1}) \wt \bfV\, \wt \bfV^\top\\
=\,& \bfA(t_{n+1}) - \wt \bfU \,\wt \bfU^\top \bfA(t_{n+1}) + \wt \bfU \,\wt \bfU^\top \bfA(t_{n+1}) - \wt \bfU \,\wt \bfU^\top \bfA(t_{n+1}) \wt \bfV\, \wt \bfV^\top = O(\mu)\,,
\end{align*}
which concludes the proof. \qed
\end{proof}
{
\begin{remark}
    The proof reveals the main strategy of constructing an order $p$ basis by spanning a (projected) order $p$ solution with sufficient accuracy, in this case $\bfY^{n+1}_{\mathrm{PRK}}$. Other approaches to construct a viable basis exist, e.g., using low-storage schemes as we do in Section~\ref{sec:num_stiff_heat}.
\end{remark}
}

\subsection{Parallel BUG integrators}
An order $p$ parallel integrator can be defined by choosing the pre-augmented bases as $\widehat \bfU_0 := \wh \bfU_{(s-1)}$ and $\wh \bfV_0 := \widehat \bfV_{(s-1)}$ if there exists an s-stage explicit Runge--Kutta method of order $p$ with Butcher coefficients $c_i, b_i$, and $a_{ij}$ where $i,j\in\{1,\cdots, s\}$. Note that this choice resembles the second-order parallel integrator \cite{kusch2024second} for $p = s = 2$. Thus, the parallel integrator requires $sr$ basis vectors.

 {The parallel integrator then solves (in parallel) for $t\in[t_n,t_{n+1}]$
\begin{alignat*}{2}
\label{eq:parallel_bugp_K_step}
    \dt{\bfK}(t)
    =\,&
    \bfF(t,\bfK(t)\widehat\bfV_0^\top)\widehat\bfV_0,
    \qquad
    &&\bfK(t_n)=\bfU_n\bfS_n\bfV_n^{\top}\wh\bfV_0\,,\\
    \dt{\bfL}(t)
    =\,&
    \bfF(t,\widehat\bfU_0\bfL(t)^\top)^\top\widehat\bfU_0,
    \qquad
    &&\bfL(t_n)=\bfV_n\bfS_n^{\top}\bfU_n^{\top}\wh\bfU_0\,,\\
    \dt{\bar{\bfS}}(t)
    =\,&
    \widehat\bfU_0^\top
    \bfF(t,\widehat\bfU_0\bar{\bfS}(t)\widehat\bfV_0^\top)
    \widehat\bfV_0,
    \qquad
    &&\bar{\bfS}(t_n)=\wh\bfV_0^{\top}\bfV_n\bfS_n\bfU_n^{\top}\wh\bfU_0\,.
\end{alignat*}
We then construct $\widehat \bfU= [\widehat \bfU_0, \widetilde \bfU_2] = \text{orth}([\widehat \bfU_0, \bfK(t_1)])$ and $\widehat \bfV= [\widehat \bfV_0, \widetilde \bfV_2] = \text{orth}([\widehat \bfV_0, \bfL(t_1)])$. 
Lastly, the coefficient matrix is assembled according to
\begin{equation}
\label{eq:parallel_bugp_assembled_S}
    \widehat\bfS_{n+1}^{\rm par}
    =
    \begin{pmatrix}
        \bar{\bfS}(t_{n+1})
        &
        \bfL(t_{n+1})^\top\widetilde\bfV_2
        \\
        \widetilde\bfU_2^\top\bfK(t_{n+1})
        &
        \bf0
    \end{pmatrix}.
\end{equation}
The time-updated solution is then given as $\bfY_{n+1}=[\![ \widehat\bfU\widehat\bfS_{n+1}^{\rm par}\widehat\bfV^\top]\!]_{\vartheta}$. A detailed implementation of the parallel BUG integrator can be found in Section~\ref{sec:impl_parallel_bugp}.}

\subsection{Robust error bound}
We can prove a robust error bound for both the parallel and augmented BUG integrators.
\begin{theorem}[Robust error bound]\label{th:robusterrorp} Under Assumptions~\ref{as:assumptions}, the error of the order $p$ parallel and augmented BUG integrators is bounded by
\begin{align}
    \| \bfY_k - \bfA(t_k)\|  \le C_0^{(p)} \delta + C_1^{(p)} h^p + C_2^{(p)} \eps_{r} + C_3^{(p)}  k\vartheta , \qquad 0\le kh \le T\,,
\end{align}
where constants only depend on the Lipschitz constant and bound of $\bfF$, a bound of {the p-th order} derivatives of the exact solution, and an upper bound of the time stepsize. 
\end{theorem}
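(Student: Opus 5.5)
The plan is the standard robust BUG argument of \cite{CeKL22,ceruti2024robust,kusch2024second}. I would first prove a local error bound of order $h(h^p+\eps_r)+\vartheta$ for one step started from $\bfY_n$, and then pass to the global bound with a Lady Windermere's fan argument based on the Lipschitz stability of the exact flow.

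For the local error of the augmented integrator, let $\bfA(t)$ solve \eqref{eq:origProb} with $\bfA(t_n)=\bfY_n$. Property~\ref{pr:ic_basis} gives $\wt\bfU\wt\bfS(t_n)\wt\bfV^\top=\bfY_n=\bfA(t_n)$, so the Galerkin solution $\bfX(t):=\wt\bfU\wt\bfS(t)\wt\bfV^\top$ starts exactly at $\bfA(t_n)$. I then compare $\bfX$ with the Galerkin projection $\bfP_{UV}\bfA(t):=\wt\bfU\wt\bfU^\top\bfA(t)\wt\bfV\wt\bfV^\top$. Since $\bfP_{UV}$ is an orthogonal projection,
\begin{align*}
\tfrac{d}{dt}\bigl(\bfX-\bfP_{UV}\bfA\bigr)=\bfP_{UV}\bigl(\bfF(t,\bfX)-\bfF(t,\bfA)\bigr).
\end{align*}
With \ref{ass:Lipschitz} and Gronwall this yields
\begin{align*}
\|\bfX(t_{n+1})-\bfP_{UV}\bfA(t_{n+1})\|\le e^{Lh}\,Lh\,\max_{t}\|\bfA(t)-\bfP_{UV}\bfA(t)\|.
\end{align*}

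The key step is to bound $\|(\bfI-\bfP_{UV})\bfA(t)\|$ uniformly for $t\in[t_n,t_{n+1}]$, and not only at $t_{n+1}$. The consistency lemma only controls $t_{n+1}$. The crude bound $\|\bfA(t)-\bfA(t_n)\|\le Bh$ would only give $O(h^2)$, which suffices for $p=1$ but not beyond. To fix this, I would apply the consistency argument on substeps of length $\tau=t-t_n\le h$. I would express the projected Runge--Kutta approximation with step $\tau$ in the same bases. Since $\tau$ only rescales the coefficients, the stages $\bfY_{(\ell)}(\tau)$ are spanned by the same type of vectors, provided I argue via Lemma~\ref{le:approxU} that the relevant bases are $h$-smooth perturbations.

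This is the part I expect to be the main obstacle. The bases are built from stage values at step $h$, not $\tau$. My first attempt avoids this by using instead that $\bfX(t_{n+1})-\bfA(t_{n+1})$ is what we need: I only need a crude bound $\max_t\|(\bfI-\bfP_{UV})\bfA(t)\|=O(h^p+\eps_r)$. This follows from Taylor expansion, because $(\bfI-\bfP_{UV})$ annihilates $\bfY_n$ and the span of $\bfF_{(\ell)}\bfV_{(\ell)}$ approximates $\dot\bfA$ up to the order needed. Multiplied by the factor $Lh$ above, this gives $O(h^{p+1}+h\eps_r)$. Combining it with the consistency lemma at $t_{n+1}$ and with the truncation error $\vartheta$ gives the local error $\|\bfY_{n+1}-\bfA(t_{n+1})\|\le c\,h(h^p+\eps_r)+\vartheta$.

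For the parallel integrator, I would write the assembled $\widehat\bfU\widehat\bfS^{\rm par}_{n+1}\widehat\bfV^\top$ as $\bfK\widehat\bfV_0^\top+\widehat\bfU_0\bfL^\top-\widehat\bfU_0\bar\bfS\widehat\bfV_0^\top$. Each of the $K$, $L$, $S$ flows is handled by the same Gronwall comparison with its own projection of $\bfA$, using the pre-augmented bases $\wh\bfU_{(s-1)},\wh\bfV_{(s-1)}$. These bases span the projected Runge--Kutta stages except for the last $\bfF_{(s)}$-term. The missing term is recovered because the combination above equals $\bfA$ minus $(\bfI-\widehat\bfU_0\widehat\bfU_0^\top)\bfA(\bfI-\widehat\bfV_0\widehat\bfV_0^\top)$, a product of two factors that are each small, as in \cite{kusch2024second}. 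Finally, the global bound follows from telescoping: the local errors propagate through the exact flow with factor $e^{L(t_k-t_n)}$, and $\delta$ enters via \ref{ass:IC}. Summing $k$ terms of size $h(h^p+\eps_r)$ gives $C_1h^p+C_2\eps_r$, and the truncation errors add up to $C_3k\vartheta$.
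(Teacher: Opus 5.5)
\paragraph{Augmented integrator.} For the augmented integrator your argument is the paper's. Your Gronwall comparison of $\wt\bfU\wt\bfS(t)\wt\bfV^\top$ with $\wt\bfU\wt\bfU^\top\bfA(t)\wt\bfV\wt\bfV^\top$ is part (c) of the paper's proof (your defect is the paper's $\bfD(t)$). It is combined, as in the paper, with the consistency relation at $t_{n+1}$, the truncation error and Lady Windermere's fan. The uniform bound on $\bfR(t)$ is only asserted in your sketch. Explicit stages approximate $\bfA(t_n+c_\ell h)$ only to $O(h^2)$, so the natural interpolation argument gives $\bfR(t)=O(h^3+h\eps_r)$, which is enough only for $p\le 3$. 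The paper's part (b) is also just a short Taylor argument, so on this point the two are at the same level.

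\paragraph{Parallel integrator: the gap.} The genuine gap is in the parallel part. Comparing $\bfK$ with $\bfA(t)\widehat\bfV_0$ (and $\bar\bfS$ with $\widehat\bfU_0^\top\bfA(t)\widehat\bfV_0$) by Gronwall produces a defect of size $L\|\bfA(t)(\bfI-\widehat\bfV_0\widehat\bfV_0^\top)\|$. You therefore need $\sup_t\|\bfA(t)(\bfI-\widehat\bfV_0\widehat\bfV_0^\top)\|=O(h^p+\eps_r)$, and this fails in general because the pre-augmented bases lack the last stage directions. Using $\bfY^{n+1}_{\mathrm{PRK}}$ and Lemma~\ref{le:approxU}, one gets $\bfA(t_{n+1})(\bfI-\widehat\bfV_0\widehat\bfV_0^\top)=hb_s\bfU_{(s)}\bfU_{(s)}^\top\bfF_{(s)}(\bfI-\widehat\bfV_0\widehat\bfV_0^\top)+O(h^{p+1}+h\eps_r)$.

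Here is a concrete case:
\begin{itemize}
\item Take $m=1$, so $\bfP(\bfY)=\bfI$, $\eps_r=0$ and there are no truncation effects. Write the row as $y$ with $\dot y=f(y)$, and use classical RK4.
\item Then $\widehat\bfV_0$ spans $y_0,k_1,k_2,k_3$.
\item The residual of $\bfA(t_n+\tau)$ is $(\tau^3/6-\tau^2h/8)$ times the part of $f''(f,f)$ orthogonal to that span, plus $O(h^4)$, i.e. $\Theta(h^3)$.
\item Your estimate then gives only local error $O(h^4)$, hence order $3$ instead of $4$.
\end{itemize}

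Two further points in your parallel sketch are inaccurate:
\begin{itemize}
\item Your rewriting of the assembled solution cannot repair this, since it only combines blocks whose accuracy must already be known. It is also not an identity: the upper-left block is $\bar\bfS$, so the term $\widehat\bfU_0(2\bar\bfS-\widehat\bfU_0^\top\bfK-\bfL^\top\widehat\bfV_0)\widehat\bfV_0^\top$ is missing.
\item The doubly-normal part is small not because it is ``a product of two small factors''. It is small because every $\bar\bfZ_i$, $i\le s$, is tangent at $\bfY_{(i)}$ with $\bfU_{(i)},\bfV_{(i)}$ contained in $\widehat\bfU_0,\widehat\bfV_0$ (part (c) of the paper).
\end{itemize}

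\paragraph{The missing idea.} What you are missing is the paper's auxiliary Runge--Kutta discretizations \eqref{eq:RK-Kstep} and \eqref{eq:RK-Sstep}. In the analysis only, apply the basis-generating order-$p$ method to the $K$- and $S$-equations.
\begin{itemize}
\item Their first $s-1$ stages agree with the projected stages up to $O(\eps_r)$ (Lemmas~\ref{le:rkstepsapprox} and \ref{le:rkstepsapproxS}).
\item The last stage enters only through $\bfF(\cdot)\widehat\bfV_0$, respectively $\widehat\bfU_0^\top\bfF(\cdot)\widehat\bfV_0$, whose argument involves only stages $1,\dots,s-1$.
\end{itemize}
This yields $\bfK(t_{n+1})-\bfA(t_{n+1})\widehat\bfV_0=O(h^{p+1}+h\eps_r)$, and the analogous bound for $\bar\bfS$, without any uniform-in-time bound. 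In the RK4 example, $\tau^3/6-\tau^2h/8$ integrates to zero over $[0,h]$, which is exactly the cancellation your sup-norm Gronwall throws away. The same device would also make the interior bound unnecessary in your augmented argument.
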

 {We note that besides this robust error bound, both integrators can be made locally conservative for kinetic equations following \cite{einkemmer2023conservation}. In the case of the parallel integrator, Remark~\ref{re:local_cons_par} comments on how to remove the coefficient update, which directly yields a locally conservative scheme (when the conservative basis is included in the basis at time $t_0$ and a conservative truncation is used), see \cite[Section~7]{einkemmer2023conservation}. This scheme is also applicable for general time integration methods, i.e., can be used with implicit or exponential integrators. In the case of the augmented BUG integrator, local conservation can be achieved through additional basis augmentations, see \cite[Section~5]{einkemmer2023conservation}, when using explicit time integrators. Using an explicit Runge--Kutta method for the coefficient update of the form
\begin{align*}
    \wt \bfS^{n+1} = \wt \bfS^{n} + h \sum_{\ell=1}^{\hat s} b_{\ell} \widetilde \bfZ_{\ell}^{(S)}\,,
\end{align*}
local conservation can be achieved by additionally augmenting the bases $\wt\bfU$ with $\sum_{\ell=1}^{\hat s} b_{\ell} \widetilde \bfZ_{\ell}^{(S)}\bfV_{\mathrm{cons}}$, where $\bfV_{\mathrm{cons}}$ are the conservative basis functions, e.g., $\bfV_{\mathrm{cons}} = (1,\cdots,1)^{\top}$ when conserving mass in a nodal directional discretization for radiative transfer.
}

In the following two sections, we prove Theorem~\ref{th:robusterrorp} for the parallel and augmented BUG integrators. Throughout our derivations, all constants are independent of the curvature of the low-rank manifold. We often use the notation $\Vert \cdot \Vert \leq  O(\mu)$ to denote that the norm is bounded by terms that depend linearly on $\mu$ with constants independent of the curvature of the low-rank manifold. 

\section{Robust error bound for the high order augmented BUG integrator} \label{sec:error_bound_abug}

Given the basis properties \ref{pr:ic_basis} and \ref{pr:consistency_basis}, the robust error bound of Theorem~\ref{th:robusterrorp} for the augmented BUG integrator follows trivially along the lines of \cite[Theorem~2]{ceruti2024robust}. For the convenience of the reader, we repeat the derivation and adapt it to the order $p$ augmented BUG integrator in the following.
\begin{proof}[Theorem~\ref{th:robusterrorp} for the order $p$ augmented BUG integrator]
The proof is subdivided into three parts (a)--(c) in which we bound the local error. We assume $\bfA(t_0) = \bfY_0$ and obtain the global error bound with Lady Windermere's fan using the stability of the exact problem. \\
(a) By construction, the basis fulfills for $\mu=h(h^p+h\eps_r)$ the consistency relation \ref{pr:consistency_basis}
\begin{align*}
    \wt \bfU\wt \bfU^{\top} \bfA(t_{1}) \wt \bfV\wt \bfV^{\top} = \bfA(t_{1}) + O(\mu)\,.
\end{align*}
Therefore, since $\bfA(t_0)-\wt \bfU \,\wt \bfU^\top \bfA(t_0) \wt \bfV\wt \bfV^{\top} = 0$, we have
\begin{align*}
    \bfA(t_1) - \bfA(t_0) - \wt \bfU \,\wt \bfU^\top (\bfA(t_1)- \bfA(t_0)) \wt \bfV\, \wt \bfV^\top = O(\mu)\,.
\end{align*}
(b) For the residual term
$$
\bfR(t) = \bfA(t) - \wt \bfU \,\wt \bfU^\top \bfA(t) \wt \bfV\, \wt \bfV^\top,
$$
we therefore have
$$
\bfR(t_1)-\bfR(t_0) = O(\mu).
$$
By our local error analysis we have 
$$
\bfR(t_0) = 0  \quad\hbox{ and }\quad \bfR(t_1)=O(\mu).
$$
{Since $\bfR$ has bounded $p$-th derivatives, we have
\begin{align*}
    \Vert \bfR(t)\Vert =\,& \Vert \bfR(t_0) + \sum_{k=1}^{p-1}\bfR^{(k)}(t_0)\frac{(t-t_0)^k}{k!}\Vert + O(h^p)\\
    \leq\,& \Vert \sum_{k=1}^{p-1}\bfR^{(k)}(t_0)\frac{h^k}{k!}\Vert + O(h^p)\\
    \leq\,& \Vert \bfR(t_1)\Vert+ O(h^p) + O(h^p) = O(h^p +\mu)\,.
\end{align*}}
Thus we obtain
$$
\bfR(t) = O(h^p+\mu), \qquad t_0\le t \le t_1.
$$

(c) We write 
\begin{align*}
    \wt \bfY_1 - \bfA(t_1) =\,& \wt \bfY_1 - \wt \bfU\wt \bfU^\top \bfA(t_1) \wt \bfV \wt \bfV^\top + \wt \bfU\wt \bfU^\top \bfA(t_1) \wt \bfV \wt \bfV^\top - \bfA(t_1)\\
=\,&\wt \bfU (\wt \bfS(t_1) -  \wt \bfU^\top \bfA(t_1) \wt \bfV) \wt \bfV^\top - \bfR(t_1) 
\end{align*}
and show that
\begin{equation}\label{eq:S-tilde}
\wt \bfS(t_1) -  \wt \bfU^\top \bfA(t_1) \wt \bfV=O(\mu).
\end{equation}
Then, since by part (b), $\bfR(t_1) = O(\mu)$, we can conclude $\wt \bfY_1 - \bfA(t_1) = O(\mu)$. To show \eqref{eq:S-tilde}, let $t_0\le t \le t_1$, and let $ \widetilde\bfS(t) := \wt \bfU^\top \bfA(t)  \wt \bfV$.
		We write
		\begin{equation*}
		\begin{aligned}
		\bfA(t) 
		&=  \bigl( \bfA(t) - \wt \bfU \,\wt \bfU^\top \bfA(t) \wt \bfV \,\wt \bfV^\top \bigr) 
         + \wt \bfU \,\wt \bfU^\top \bfA(t) \wt \bfV \,\wt \bfV^\top 
		=  \textbf{R}(t)+ \wt \bfU \widetilde\bfS(t) \wt \bfV^\top
		\end{aligned}
		\end{equation*}
        and
        \begin{equation*}		\begin{aligned}
		\bfF(t, \bfA(t)) 
		&= \bfF(t, \wt \bfU \widetilde\bfS(t) \wt \bfV^\top + \textbf{R}(t) ) 
         = \bfF(t, \wt \bfU \widetilde\bfS(t) \wt \bfV^\top) + \bfD(t)
		\end{aligned}
		\end{equation*}
		where the defect $\bfD$ is given by
		$
		\bfD(t) := \bfF(t, \wt \bfU \widetilde\bfS(t) \wt \bfV^\top + \textbf{R}(t)) - \bfF(t, \wt \bfU \widetilde\bfS(t) \wt \bfV^\top).
		$
		Using the Lipschitz continuity of $\bfF$ with constant $L$ and the bound of $\bfR(t)$ from part (b), the defect is bounded by
		$$
		\|  \bfD(t) \| \le L \| \textbf{R}(t) \| =O(h^p+\mu).
		$$
		Lastly, we compare
		\begin{equation*}
		\begin{aligned}
		&\dt{\wt\bfS}(t) = \wt \bfU^\top \bfF(t, \wt \bfU\, \wt\bfS(t) \wt \bfV^\top)  \wt \bfV, 
		\qquad
		&\wt\bfS(t_0) = \wt \bfU^\top \bfY_0  \wt \bfV,
        \\
        &\dt{\widetilde\bfS}(t) = \wt \bfU^\top \bfF(t, \wt \bfU \widetilde\bfS(t) \wt \bfV^\top)  \wt \bfV + \wt \bfU^\top \bfD(t)  \wt \bfV, 
		\qquad
		&\widetilde\bfS(t_0) = \wt \bfU^\top \bfY_0  \wt \bfV.
		\end{aligned}
		\end{equation*}
		Then, the Gronwall inequality yields
		$$
		\|  \wt\bfS(t_1) - \widetilde \bfS(t_1) \| 
		\leq \int_{t_0}^{t_1} e^{L(t_1-s)} \, \| \bfD(s) \| \, ds
		=O(h(h^p+\mu))= O(\mu).
		$$
	
		Therefore, \eqref{eq:S-tilde} holds, and thus
        \begin{align*}
            \bfA(t_1) - \bfY_1 = \bfA(t_1) - \wt\bfY_1 + \wt\bfY_1 - \bfY_1 = O(\mu + \vartheta)
        \end{align*}
        which concludes the proof. 
\qed
\end{proof}

\begin{remark}\label{re:heps-truncation}
    As pointed out in \cite{nobile2025robust}, when the solution is truncated to the original rank $r$, then the $O(\vartheta)$ term can be replaced by a term depending on the normal components of the right-hand side. Even though an adaptive rank is often required in practical applications, we briefly repeat the arguments of \cite[Lemma~3]{KiV19} to achieve such a bound. When truncating to the original rank $r$ (or higher), we have
    \begin{align*}
        \Vert \bfY_1 - \wt\bfY_1\Vert =\,& \min_{\bfZ\in\mathcal{M}_{r}} \Vert \bfZ - \wt\bfY_1\Vert\\
        \leq\,& \min_{\bfZ\in\mathcal{M}_{r}} \Vert \bfZ - \bfA(t_1) \Vert + \Vert \bfA(t_1) - \wt\bfY_1\Vert.
    \end{align*}
    Thus, with $\bfX(t_1)$ being the solution of $\dt{\bfX}(t) = \bfP(\bfX(t))\bfF(t, \bfX(t))$ with initial condition $\bfX(t_0) = \bfY_0$, we know that
    \begin{align*}
        \Vert \bfY_1 - \wt\bfY_1\Vert \leq \Vert \bfX(t_1) - \bfA(t_1) \Vert + \Vert \bfA(t_1) - \wt\bfY_1\Vert = O(\mu)\,,
    \end{align*}
    where we use that $\Vert \bfX(t_1) - \bfA(t_1) \Vert = O(h\varepsilon_{r})$ according to, e.g., \cite[Lemma~1]{KiV19}.
\end{remark}

\section{Robust error bound for the high order parallel BUG integrator}\label{sec:error_bound_pbug}

To establish a robust error bound for the parallel integrator, a key step is to demonstrate that the constructed bases approximate both the full-rank and the projected Runge--Kutta substeps with sufficient accuracy. To this end, we define an $s$-stage Runge–Kutta method for the $K$-step of the parallel integrator as
\begin{subequations}\label{eq:RK-Kstep}
    \begin{align}
    \widetilde \bfZ_{j} =\,& \bfF(t_0 + c_{j}h, \bfY_0 + h\sum_{\ell = 1}^{j-1}a_{j,\ell}\widetilde \bfZ_{\ell})\widehat \bfV_0\widehat \bfV_0^{\top} \\
    \bfY_1^{(K)} :=\,& \bfK_1\widehat \bfV_0^{\top} = \bfY_0 + h\sum_{\ell = 1}^s b_{\ell}\widetilde \bfZ_{\ell}\,,
\end{align}
\end{subequations}
and for the $S$-step as
\begin{subequations}\label{eq:RK-Sstep}
    \begin{align}
    \widetilde \bfZ_{j}^{(S)} =\,& \widehat \bfU_0\widehat \bfU_0^{\top}\bfF(t_0 + c_{j}h, \bfY_0 + h\sum_{\ell = 1}^{j-1}a_{j,\ell}\widetilde \bfZ_{\ell}^{(S)})\widehat \bfV_0\widehat \bfV_0^{\top} \\
    \bfY_1^{(S)} :=\,& \widehat \bfU_0\bfS_1\widehat \bfV_0^{\top} = \bfY_0 + h\sum_{\ell = 1}^s b_{\ell}\widetilde \bfZ_{\ell}^{(S)}\,.
\end{align}
\end{subequations}
We wish to stress that such a time integration method is not required in the actual $K$ and $S$-step computations and serves the sole purpose of enabling our numerical analysis. In the following, we want to understand how well these time integration methods approximate the projected Runge--Kutta method \eqref{eq:projRK} and its full-rank counterpart
\begin{align*}
    \bfZ_{j} = \bfF(t_0 + c_{j}h, \bfY_0 + h\sum_{\ell = 1}^{j-1}a_{j,\ell}\bfZ_{\ell})\,,\quad \bfA_1 = \bfY_0 + h\sum_{\ell = 1}^s b_{\ell}\bfZ_{\ell}\,.
\end{align*}
We can show the following result
\begin{lemma}\label{le:rkstepsapprox}
Let Assumption~\ref{ass:epsilon} hold. Then, for $j\leq s-1$ we have that
    \begin{align}\label{eq:leRKclosenessS}
        \widetilde \bfZ_j = \bfZ_j + O(\varepsilon_r) = \bar \bfZ_j + O(\varepsilon_r)\,.
    \end{align}
\end{lemma}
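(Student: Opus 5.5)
Argue by induction on $j=1,\dots,s-1$, with the inductive hypothesis that $\widetilde \bfZ_\ell = \bar \bfZ_\ell + O(\varepsilon_r)$ and $\bfZ_\ell = \bar\bfZ_\ell + O(\varepsilon_r)$ for all $\ell<j$. The constants may depend on $L$, $B$, the Butcher coefficients and an upper bound of $h$, but not on the curvature.

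For the stage arguments, write $\bfY_{(j)}^{\mathrm{full}} = \bfY_0 + h\sum_{\ell<j}a_{j,\ell}\bfZ_\ell$ and $\widetilde\bfY_{(j)} = \bfY_0 + h\sum_{\ell<j}a_{j,\ell}\widetilde\bfZ_\ell$. The hypothesis gives that both differ from the untruncated projected stage $\bfY_0 + h\sum_{\ell<j}a_{j,\ell}\bar\bfZ_\ell$ by $O(h\varepsilon_r)$. This untruncated projected stage is in turn related to $\bfY_{(j)}$, its rank-$r$ truncation in \eqref{eq:projRK_rksub}. I would use the argument of \cite[Lemma~3]{KiV19}, recalled in Remark~\ref{re:heps-truncation}: the truncation error is bounded by the distance to $\mathcal{M}_r$ of a nearby point. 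Concretely, that point is the solution of the projected flow \eqref{eq:proj_flow} started from $\bfY_0$, which is $O(h\varepsilon_r)$-close to the untruncated sum. So $\bfY_{(j)}$, $\bfY_{(j)}^{\mathrm{full}}$ and $\widetilde \bfY_{(j)}$ agree up to $O(h\varepsilon_r)$. If needed, I would quote \cite{KiV19} for this stage-wise truncation bound directly rather than reprove it.

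Next compare the increments. By Lipschitz continuity (A1), $\bfZ_j = \bfF(t_{(j)},\bfY_{(j)}^{\mathrm{full}}) = \bfF_{(j)} + O(h\varepsilon_r)$. Assumption (A3) gives $\bfF_{(j)} = \bfP(\bfY_{(j)})\bfF_{(j)} + O(\varepsilon_r) = \bar\bfZ_j + O(\varepsilon_r)$, which proves the second equality.

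For $\widetilde\bfZ_j$, first write
\[
\widetilde\bfZ_j = \bfF(t_{(j)},\widetilde\bfY_{(j)})\wh\bfV_0\wh\bfV_0^\top = \bfF_{(j)}\wh\bfV_0\wh\bfV_0^\top + O(h\varepsilon_r).
\]
Then replace $\bfF_{(j)}$ by $\bar\bfZ_j$ at cost $O(\varepsilon_r)$, since $\|\wh\bfV_0\wh\bfV_0^\top\|_2\le 1$. The remaining claim is $\bar\bfZ_j \wh\bfV_0\wh\bfV_0^\top = \bar\bfZ_j$. This is where $j\le s-1$ is used. By the tangent-space structure, $\bar\bfZ_j = \bfU_{(j)}\bfU_{(j)}^\top\bfF_{(j)}(\bfI-\bfV_{(j)}\bfV_{(j)}^\top) + \bfF_{(j)}\bfV_{(j)}\bfV_{(j)}^\top$. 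Its row space is therefore contained in $\mathrm{span}\{\bfV_{(j)}, \bfF_{(j)}^\top\bfU_{(j)}\}$. By Lemma~\ref{le:approxU} and definition \eqref{eq:augBasisGeneralV}, this span lies in $\mathrm{span}\,\wh\bfV_{(j)} \subseteq \mathrm{span}\,\wh\bfV_{(s-1)} = \mathrm{span}\,\wh\bfV_0$. Hence $\widetilde \bfZ_j = \bar\bfZ_j + O(\varepsilon_r)$, which closes the induction. The base case $j=1$ is the same argument with all stage arguments equal to $\bfY_0 = \bfY_{(1)}$.

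The main obstacle is the truncation step: keeping $\bfY_{(j)} - (\bfY_0 + h\sum a_{j,\ell}\bar\bfZ_\ell)$ at size $O(h\varepsilon_r)$ without invoking curvature. It has to come from the best-approximation property and the normal-component bound, not from any Lipschitz property of the truncation. A secondary point is tracking that the $O(\varepsilon_r)$ constants grow only geometrically in $j\le s$.
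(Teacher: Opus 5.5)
Your skeleton is the paper's proof: induction on $j$, \ref{ass:Lipschitz} to move between stage arguments, \ref{ass:epsilon} to replace $\bfF_{(j)}$ by $\bfP(\bfY_{(j)})\bfF_{(j)}$, and Lemma~\ref{le:approxU} to get $\bar\bfZ_j\wh\bfV_0\wh\bfV_0^\top=\bar\bfZ_j$ for $j\le s-1$. You depart from it only at the rank-$r$ truncation in \eqref{eq:projRK_rksub}, and that is where the proposal fails. The paper does not estimate this truncation. It simply writes $\bfF(t_0+c_jh,\bfY_0+h\sum_{\ell<j}a_{j,\ell}\bar\bfZ_\ell)=\bfF_{(j)}$, i.e.\ it identifies the stage with its untruncated combination. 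You are right that this is the delicate point. However, your claim that $\bfX(t_0+c_jh)$ is $O(h\varepsilon_r)$-close to the untruncated stage is false. An internal stage of an explicit Runge--Kutta method approximates the flow at $t_0+c_jh$ only to $O(h^2)$; the second stage is just an Euler step of length $c_2h$. So the best-approximation argument of \cite[Lemma~3]{KiV19} gives only $\|\bfY_{(j)}-(\bfY_0+h\sum_{\ell<j}a_{j,\ell}\bar\bfZ_\ell)\|=O(h^2+h\varepsilon_r)$. Remark~\ref{re:heps-truncation} reaches $O(\mu)$ only because it is applied to the order-$p$ endpoint, not to an internal stage. Your chain therefore gives $\bfZ_j,\widetilde\bfZ_j=\bar\bfZ_j+O(\varepsilon_r+h^2)$, not $O(\varepsilon_r)$.

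A sharper truncation estimate cannot fix this, because with truncated stages the comparison with $\bar\bfZ_j$ is actually false. Take any method with $s\ge3$ and $a_{2,1}\neq0$, and $r=1$. Let $\bfF(\bfA)=\phi(\|\bfA\|)(\bfD\bfA+\bfA\bfD^\top)$ with $\bfD=\mathbf e_1\mathbf e_2^\top+\mathbf e_2\mathbf e_1^\top$ and a scalar cut-off $\phi$ equal to one near $\bfY_0$, so that \ref{ass:Lipschitz}--\ref{ass:bounded} hold. Let $\bfY_0=\mathbf e_1\mathbf e_1^\top$. Then $\bfF(\bfY)$ is tangent at every rank-one $\bfY$, so $\varepsilon_r=0$. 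The untruncated second stage has the single nonzero block
\[
\bfY_0+ha_{2,1}\bar\bfZ_1=\begin{pmatrix}1&ha_{2,1}\\ ha_{2,1}&0\end{pmatrix},
\]
whose second singular value is $\approx a_{2,1}^2h^2$. The truncation moves it by that amount, and $\|\bar\bfZ_2-\bfZ_2\|=\|\bfF(\bfY_{(2)})-\bfF(\bfY_0+ha_{2,1}\bfZ_1)\|\approx\sqrt2\,a_{2,1}^2h^2\neq0$, while $\widetilde\bfZ_2=\bfZ_2$ here. So, taking the truncation literally, what your route can prove is $O(\varepsilon_r+h^2)$. After the factor $h$ in Lemma~\ref{le:suff_acc_basis} this gives only $O(h^3+h\varepsilon_r)$, which is insufficient for $p\ge3$ without an additional cancellation argument. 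If instead you adopt the paper's tacit identification of $\bfY_{(j)}$ with the untruncated stage combination, your truncation paragraph is unnecessary and the rest of your argument is exactly the paper's proof.
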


\begin{proof}
This lemma is proven by induction. We have for $j=1$
\begin{align*}
    \widetilde \bfZ_1 = \bfF(t_0, \bfY_0)\widehat \bfV_0\widehat \bfV_0^{\top} \stackrel{\mathrm{Ass.}~\ref{ass:epsilon}}{=} \bfP(\bfY_0)\bfF(t_0, \bfY_0)\widehat \bfV_0\widehat \bfV_0^{\top} + O(\varepsilon_r).
\end{align*}
Since $\bfV_0$, $\bfF(t_0, \bfY_0)^{\top}\bfU_0$ and thus $\bfP(\bfY_0)\bfF(t_0, \bfY_0)$ are spanned by $\widehat \bfV_0$, we have $\widetilde \bfZ_1 = \bar \bfZ_1 + O(\varepsilon_r)$ and by boundedness of normal components $\bar \bfZ_1 = \bfZ_1 + O(\varepsilon_r)$.
Let us assume the induction hypothesis (IH) that the statement \eqref{eq:leRKcloseness} holds for an arbitrary but fixed $j-1 < s-1$. Then, for $j \leq s-1$
    \begin{align*}
        \widetilde \bfZ_j =\,& \bfF(t_0 + c_j h, \bfY_0 + h\sum_{\ell = 1}^{j-1}a_{j,\ell}\widetilde \bfZ_{\ell})\widehat \bfV_0\widehat \bfV_0^{\top}\\
        \stackrel{\mathrm{(IH),~\mathrm{ Ass.}~\ref{ass:Lipschitz}}}{=}\,& \bfF(t_0 + c_j h, \bfY_0 + h\sum_{\ell = 1}^{j-1}a_{j,\ell}\bar \bfZ_{\ell})\widehat \bfV_0\widehat \bfV_0^{\top} + O(Lh\varepsilon_r)\\
        =\,& \bfF_{(j)}\widehat \bfV_0\widehat \bfV_0^{\top} + O(Lh\varepsilon_r)\,.
    \end{align*}
    By boundedness of normal components, we have
    \begin{align*}
        \bfF_{(j)}\widehat \bfV_0\widehat \bfV_0^{\top} \stackrel{\text{Ass.}~\ref{ass:epsilon}}{=}\,&\bfP(\bfY_{(j)})\bfF_{(j)}\widehat \bfV_0\widehat \bfV_0^{\top} + O(\varepsilon_r)\,.
    \end{align*}    
    Moreover, by construction and with Lemma~\ref{le:approxU}, we know that $\widehat \bfV_0$ spans $\bfV_{(j)}$, $\bfF_{(j)}^{\top}\bfU_{(j)}$, and thus the terms $\bfP(\bfY_{(j)})\bfF_{(j)}$ for $j\leq s-1$ are spanned by $\widehat \bfV_0$ as well. Hence, 
    \begin{align*}
        \bfP(\bfY_{(j)})\bfF_{(j)}\widehat \bfV_0\widehat \bfV_0^{\top} =\,& \bfP(\bfY_{(j)})\bfF_{(j)} \stackrel{\text{Ass.}~\ref{ass:epsilon}}{=} \bfF_{(j)} + O(\varepsilon_r)\\
        =\,& \bfF(t_0 + c_j h, \bfY_0 + h\sum_{\ell = 1}^{j-1}a_{j,\ell} \bar \bfZ_{\ell}) + O(\varepsilon_r)\\
        \stackrel{\mathrm{(IH),~\mathrm{ Ass.}~\ref{ass:Lipschitz}}}{=}\,& \bfF(t_0 + c_j h, \bfY_0 + h\sum_{\ell = 1}^{j-1}a_{j,\ell} \bfZ_{\ell}) + O(\varepsilon_r) + O(Lh\varepsilon_r)\,.
    \end{align*}
    Thus, $\widetilde \bfZ_j = \bfZ_j + O(\varepsilon_r) = \bar\bfZ_j + O(\varepsilon_r)$ which concludes the proof. \qed
\end{proof}

The same result holds for the Runge-Kutta method of the $S$-step:
\begin{lemma}\label{le:rkstepsapproxS}
Let Assumption~\ref{ass:epsilon} hold. Then, for $j\leq s-1$ we have that
    \begin{align}\label{eq:leRKcloseness}
        \widetilde \bfZ_j^{(S)} = \bfZ_j + O(\varepsilon_r) = \bar \bfZ_j + O(\varepsilon_r)\,.
    \end{align}
\end{lemma}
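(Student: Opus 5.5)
We prove the statement by induction over $j$, mirroring the proof of Lemma~\ref{le:rkstepsapprox}. The one new ingredient is the left projection $\widehat \bfU_0\widehat \bfU_0^{\top}$, so each step must also show that the row space of the relevant tangent component is captured by $\widehat \bfU_0 = \widehat \bfU_{(s-1)}$. For $j\leq s-1$, Lemma~\ref{le:approxU} together with the definition \eqref{eq:augBasisGeneral} gives that $\widehat \bfU_0$ spans $\bfU_{(j)}$ and $\bfF_{(j)}\bfV_{(j)}$. Likewise $\widehat \bfV_0$ spans $\bfV_{(j)}$ and $\bfF_{(j)}^{\top}\bfU_{(j)}$.

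\textbf{Key algebraic identity.} For $j\leq s-1$,
\begin{align*}
\widehat \bfU_0\widehat \bfU_0^{\top}\,\bfP(\bfY_{(j)})\bfF_{(j)}\,\widehat \bfV_0\widehat \bfV_0^{\top} = \bfP(\bfY_{(j)})\bfF_{(j)}.
\end{align*}
To verify it, write the tangent projection as
\begin{align*}
\bfP(\bfY_{(j)})\bfF_{(j)} = \bfU_{(j)}\bfU_{(j)}^{\top}\bfF_{(j)} + (\bfI-\bfU_{(j)}\bfU_{(j)}^{\top})\bfF_{(j)}\bfV_{(j)}\bfV_{(j)}^{\top}.
\end{align*}
Its column space lies in $\mathrm{span}\{\bfU_{(j)},\bfF_{(j)}\bfV_{(j)}\}\subseteq\mathrm{span}(\widehat\bfU_0)$. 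Its row space lies in $\mathrm{span}\{\bfV_{(j)},\bfF_{(j)}^{\top}\bfU_{(j)}\}\subseteq\mathrm{span}(\widehat\bfV_0)$. Hence both orthogonal projectors act as the identity on it.

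\textbf{Base case $j=1$.} Here $\widetilde \bfZ_1^{(S)} = \widehat \bfU_0\widehat \bfU_0^{\top}\bfF(t_0,\bfY_0)\widehat \bfV_0\widehat \bfV_0^{\top}$. Assumption~\ref{ass:epsilon} lets us replace $\bfF(t_0,\bfY_0)$ by $\bfP(\bfY_0)\bfF(t_0,\bfY_0)$ at cost $O(\varepsilon_r)$, since orthogonal projections have norm at most one. The identity then gives $\widetilde \bfZ_1^{(S)} = \bar\bfZ_1 + O(\varepsilon_r)$. Applying Assumption~\ref{ass:epsilon} once more gives $\bar\bfZ_1=\bfZ_1+O(\varepsilon_r)$.

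\textbf{Induction step.} Assume the statement holds for all $\ell\leq j-1$, and let $j\leq s-1$.
\begin{enumerate}
\item By the induction hypothesis and the Lipschitz Assumption~\ref{ass:Lipschitz}, replace $\widetilde \bfZ_\ell^{(S)}$ inside $\bfF$ by $\bar\bfZ_\ell$. This costs $O(Lh\varepsilon_r)$, again using that the projections are contractive. The argument of $\bfF$ becomes exactly the unrounded stage $\bfY_0 + h\sum_{\ell}a_{j,\ell}\bar\bfZ_\ell$.
\item This stage differs from $\bfF_{(j)}$ because $\bfY_{(j)}$ in \eqref{eq:projRK_rksub} is its rank-$r$ truncation. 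I follow the paper's convention from Lemma~\ref{le:rkstepsapprox}, which identifies the two; this is justified since, by the remark on the method of \cite{KiV19}, the truncation error is itself $O(h\varepsilon_r)$. The Lipschitz bound then controls the difference. This is the step I expect to be the main obstacle, but I will treat it exactly as in the previous lemma.
\item We now have $\widehat \bfU_0\widehat \bfU_0^{\top}\bfF_{(j)}\widehat \bfV_0\widehat \bfV_0^{\top}$. Apply Assumption~\ref{ass:epsilon} to pass to $\bfP(\bfY_{(j)})\bfF_{(j)}$, then the key identity to remove both projectors. The result is $\bar\bfZ_j+O(\varepsilon_r)$.
\item Finally, $\bar\bfZ_j = \bfF_{(j)}+O(\varepsilon_r)$. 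The induction hypothesis and the Lipschitz bound give $\bfF_{(j)} = \bfZ_j + O(\varepsilon_r)$, with constants absorbing $Lh$ for bounded $h$.
\end{enumerate}

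The restriction $j\leq s-1$ is used in the identity step: $\widehat\bfU_0=\widehat\bfU_{(s-1)}$ spans $\bfF_{(j)}\bfV_{(j)}$ only up to $j=s-1$.
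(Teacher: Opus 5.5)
Your induction follows the paper's route: its proof only says it is analogous to Lemma~\ref{le:rkstepsapprox}. Your two-sided identity $\wh\bfU_0\wh\bfU_0^\top\bfP(\bfY_{(j)})\bfF_{(j)}\wh\bfV_0\wh\bfV_0^\top=\bfP(\bfY_{(j)})\bfF_{(j)}$ for $j\le s-1$ is exactly the extra ingredient the left projector needs. The genuine gap is step~2, which step~4 reuses when you pass from $\bfF_{(j)}$ to $\bfZ_j$. After step~1 the argument of $\bfF$ is the \emph{untruncated} stage $\bfX_j:=\bfY_0+h\sum_{\ell<j}a_{j,\ell}\bar\bfZ_\ell$, whereas $\bfF_{(j)}$ is evaluated at $\bfY_{(j)}=[\![\bfX_j]\!]$. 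Your justification that $\Vert\bfX_j-\bfY_{(j)}\Vert=O(h\eps_r)$ is not correct. Remark~\ref{re:heps-truncation} bounds a truncation error by quasi-optimality only because the truncated object $\wt\bfY_1$ is already $O(h(h^p+\eps_r))$-close to $\bfA(t_1)$. An explicit Runge--Kutta stage has stage order one, so $\bfX_j$ is only $O(h^2+h\eps_r)$-close to $\bfA(t_0+c_jh)$. The same argument therefore gives only $\Vert\bfX_j-\bfY_{(j)}\Vert=O(h^2+h\eps_r)$, and the $h^2$ term is genuinely present.

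\textbf{A counterexample.} Take $m=n=2$, $r=1$, $\bfY_0=\mathbf{e}_1\mathbf{e}_1^\top$, and $\bfF(t,\bfY)=\mathbf{B}\bfY+\bfY\bfC$ with $\mathbf{B}=\bigl(\begin{smallmatrix}0&1\\1&0\end{smallmatrix}\bigr)$ and $\bfC=\bigl(\begin{smallmatrix}0&1\\0&0\end{smallmatrix}\bigr)$. Precompose with the radial projection onto a large ball to obtain \ref{ass:bounded}; this changes nothing near the data. Since $\mathbf{B}\bfU\bfS\bfV^\top+\bfU\bfS\bfV^\top\bfC$ always lies in the tangent space, $\eps_r=0$.
\begin{itemize}
\item Here $\bar\bfZ_1=\bfZ_1=\bfF(\bfY_0)=\bigl(\begin{smallmatrix}0&1\\1&0\end{smallmatrix}\bigr)$. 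Hence $\wh\bfU_0$ and $\wh\bfV_0$ both span $\mathbb{R}^2$, since they contain $\mathbf{e}_1$ and $\bfF(\bfY_0)\mathbf{e}_1=\bfF(\bfY_0)^\top\mathbf{e}_1=\mathbf{e}_2$. Consequently $\widetilde\bfZ_2^{(S)}=\bfZ_2$.
\item However, $\bfX_2=\bigl(\begin{smallmatrix}1&\eta\\ \eta&0\end{smallmatrix}\bigr)$ with $\eta=a_{2,1}h$ has determinant $-\eta^2$. Its rank-one truncation discards $\lambda_2\mathbf{q}\mathbf{q}^\top$, where $\mathbf{q}$ is a unit vector and $|\lambda_2|=\eta^2+O(\eta^4)$.
\item By linearity, $\bfZ_2-\bar\bfZ_2=\bfF(\lambda_2\mathbf{q}\mathbf{q}^\top)$. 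Its $(1,2)$ entry is $\lambda_2(q_1^2+q_2^2)=\lambda_2\neq0$.
\end{itemize}
So for every method with $s\ge3$ and $a_{2,1}\neq0$, the claim $\widetilde\bfZ_2^{(S)}=\bar\bfZ_2+O(\eps_r)$ fails. No sharper estimate can therefore close step~2 as you set it up.

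The paper's proof of Lemma~\ref{le:rkstepsapprox} makes the same identification of $\bfF(t_0+c_jh,\bfY_0+h\sum_\ell a_{j,\ell}\bar\bfZ_\ell)$ with $\bfF_{(j)}$ without comment. You have reproduced the paper's argument, weak point included. Once the stage truncation is accounted for honestly, your induction yields only $\widetilde\bfZ_j^{(S)}=\bfZ_j+O(\eps_r+h^2)=\bar\bfZ_j+O(\eps_r+h^2)$. After the factor $h$ in the places where the lemma is used, this gives $O(h^3+h\eps_r)$, which is too weak for $p\ge3$.
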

\begin{proof}
    The proof follows analogously to that of Lemma~\ref{le:rkstepsapprox}.
\end{proof}

A key ingredient in proving the robust error bound is showing that the bases accurately span the full-rank solution. More precisely, we have for $\dt\bfA = \bfF(t,\bfA)$ with $\bfA(t_0) = \bfY_0$
\begin{lemma}\label{le:suff_acc_basis}
    Let Assumptions~\ref{ass:Lipschitz}, \ref{ass:bounded}, and \ref{ass:epsilon} hold. Then, the bases $\wh \bfU$, $\wh \bfV$ span $\bfA(t_1)$ sufficiently accurate, that is,
    \begin{align*}
        \Vert \bfA(t_1) - \wh \bfU\wh \bfU^{\top}\bfA(t_1)\wh \bfV\wh \bfV^{\top} \Vert \leq c_1h^{p+1} + c_2h\varepsilon_r\,.
    \end{align*}
\end{lemma}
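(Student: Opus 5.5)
Following the strategy of the consistency lemma for the augmented bases, we will exhibit an order-$p$ approximation of $\bfA(t_1)$ whose column range lies in $\wh\bfU$ and whose row range lies in $\wh\bfV$, up to $O(\mu)$ with $\mu = h(h^p+\eps_r)$. Since the exact Runge--Kutta method of order $p$ is consistent, $\bfA(t_1) = \bfA_1 + O(h^{p+1})$ with $\bfA_1 = \bfY_0 + h\sum_{\ell=1}^s b_\ell \bfZ_\ell$. It therefore suffices to show $(\bfI - \wh\bfU\wh\bfU^\top)\bfA_1 = O(\mu)$ and $\bfA_1(\bfI-\wh\bfV\wh\bfV^\top) = O(\mu)$. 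The two-sided statement then follows by the same splitting
\[
\bfA - \wh\bfU\wh\bfU^\top\bfA\wh\bfV\wh\bfV^\top = (\bfI-\wh\bfU\wh\bfU^\top)\bfA + \wh\bfU\wh\bfU^\top\bfA(\bfI-\wh\bfV\wh\bfV^\top)
\]
used before.

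For the column space, we will compare $\bfA_1$ with $\bfY_1^{(K)} = \bfK_1\wh\bfV_0^\top$ from \eqref{eq:RK-Kstep}. The columns of $\bfY_1^{(K)}$ lie in the span of $\bfK_1$, but the basis contains the exact $K$-step solution $\bfK(t_1)$, not $\bfK_1$. We therefore first note that $\bfK_1$ is an order-$p$ Runge--Kutta approximation of the $K$-ODE. Because the $K$-ODE has right-hand side $\bfF(t,\bfK\wh\bfV_0^\top)\wh\bfV_0$, which is Lipschitz and bounded with $p$-th derivatives controlled by those of $\bfF$ (constants independent of curvature), we get $\bfK(t_1) = \bfK_1 + O(h^{p+1})$. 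Hence $(\bfI-\wh\bfU\wh\bfU^\top)\bfY_1^{(K)} = O(h^{p+1})$.

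It remains to bound $\bfA_1 - \bfY_1^{(K)} = h\sum_\ell b_\ell(\bfZ_\ell - \widetilde\bfZ_\ell)$. For $\ell \le s-1$, Lemma~\ref{le:rkstepsapprox} gives $\widetilde\bfZ_\ell - \bfZ_\ell = O(\eps_r)$, contributing $O(h\eps_r)$. The last stage $\ell = s$ is the main obstacle, since Lemma~\ref{le:rkstepsapprox} stops at $s-1$ and $\wh\bfV_0$ need not span $\bfF_{(s)}^\top\bfU_{(s)}$. I would handle it as follows. The stage arguments $\bfY_0 + h\sum_{\ell<s}a_{s\ell}\widetilde\bfZ_\ell$ and $\bfY_0 + h\sum_{\ell<s} a_{s\ell}\bfZ_\ell$ differ by $O(h\eps_r)$, and both equal $\bfY_{(s)}$ up to $O(h\eps_r)$; note that no truncation is needed here because $\bfY_{(s)}$ lies in the spans by Lemma~\ref{le:approxU}. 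Thus $\widetilde\bfZ_s = \bfZ_s\wh\bfV_0\wh\bfV_0^\top + O(h\eps_r)$, and the only nonsmall discrepancy is the term $h b_s\bfZ_s(\bfI-\wh\bfV_0\wh\bfV_0^\top)$.

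Multiplying this term by $(\bfI-\wh\bfU\wh\bfU^\top)$, we write $\bfZ_s = \bfP(\bfY_{(s)})\bfF_{(s)} + O(\eps_r + h\eps_r)$. Then $\bfP(\bfY_{(s)})\bfF_{(s)}(\bfI-\wh\bfV_0\wh\bfV_0^\top) = \bfU_{(s)}\bfU_{(s)}^\top\bfF_{(s)}(\bfI-\wh\bfV_0\wh\bfV_0^\top)$, since $\bfV_{(s)} \in \wh\bfV_0$ by Lemma~\ref{le:approxU}. Its column range is contained in $\bfU_{(s)} \subset \wh\bfU_0 \subset \wh\bfU$, again by Lemma~\ref{le:approxU}, so it is annihilated by $\bfI - \wh\bfU\wh\bfU^\top$. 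This leaves $O(h\eps_r)$. Combining the pieces gives $(\bfI-\wh\bfU\wh\bfU^\top)\bfA_1 = O(h^{p+1}+h\eps_r)$. The row statement follows symmetrically using the $L$-step and $\bfL(t_1)$. The delicate point to verify carefully is exactly this last-stage argument; if the span claim for $\bfU_{(s)}$ fails, I would fall back on augmenting with $\bfK(t_1)$ and invoking the order-$p$ accuracy of the $K$-step at stage $s$ directly.
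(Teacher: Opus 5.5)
Your proposal is correct at the paper's level of rigor and follows its proof essentially step for step: the one-sided splitting, the replacement of $\bfA(t_1)$ by $\bfA_1$ and of $\bfK(t_1)\wh\bfV_0^\top$ by $\bfY_1^{(K)}$, Lemma~\ref{le:rkstepsapprox} for the stages $\ell\le s-1$, and the removal of the last stage via $\bfF_{(s)}\approx\bfP(\bfY_{(s)})\bfF_{(s)}$ together with $\bfU_{(s)}\subset\wh\bfU_0$, $\bfV_{(s)}\subset\wh\bfV_0$ from Lemma~\ref{le:approxU} (so your fallback is never needed). You should drop your parenthetical reason for neglecting the stage truncation, because $\bfU_{(s)},\bfV_{(s)}$ lying in the spans says nothing about the size of $\bfY_{(s)}-(\bfY_0+h\sum_{\ell<s}a_{s\ell}\bar\bfZ_\ell)$; note, though, that the paper also identifies $\bfF(t_{(s)},\bfY_0+h\sum_{\ell<s}a_{s\ell}\bfZ_\ell)$ with $\bfF_{(s)}$ up to $O(\varepsilon_r)$ without addressing this truncation.
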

\begin{proof}
We have 
\begin{align*}
    \Vert \bfA(t_1) - \wh \bfU\wh \bfU^{\top}\bfA(t_1)\wh \bfV\wh \bfV^{\top} \Vert =\,& \Vert \bfA(t_1) - \wh \bfU\wh \bfU^{\top}\bfA(t_1) + \wh \bfU\wh \bfU^{\top}\bfA(t_1)-\wh \bfU\wh \bfU^{\top}\bfA(t_1)\wh \bfV\wh \bfV^{\top} \Vert\\
    \leq\,& \Vert (\bfI - \wh \bfU\wh \bfU^{\top})\bfA(t_1)\Vert + \Vert \bfA(t_1)(\bfI-\wh \bfV\wh \bfV^{\top}) \Vert\,.
\end{align*}
We only bound the first term on the right-hand side and note that the second term follows analogously. Due to the order $p$ of the Runge--Kutta method $\bfA_1 = \bfY_0 + h\sum_{i=1}^s b_i \bfZ_i$ we have
\begin{align*}
    \Vert (\bfI - \wh \bfU\wh \bfU^{\top})\bfA(t_1)\Vert \leq \Vert (\bfI - \wh \bfU\wh \bfU^{\top})\bfA_1\Vert + O(h^{p+1})\,.
\end{align*}
By construction of the basis, we have $(\bfI-\wh \bfU\wh \bfU^{\top})\bfK(t_1) = 0$. Moreover, the Runge--Kutta method of the $K$-step \eqref{eq:RK-Kstep} is of order $p$ which implies $\bfY_1^{(K)} - \bfK(t_1)\wh \bfV_0^{\top} = O(h^{p+1})$. Taken together, we then have
\begin{align*}
    \Vert (\bfI - \wh \bfU\wh \bfU^{\top})\bfA_1\Vert =\,& \Vert (\bfI - \wh \bfU\wh \bfU^{\top})(\bfA_1-\bfK(t_1)\wh \bfV_0^{\top})\Vert\\
    \leq\,&\Vert (\bfI - \wh \bfU\wh \bfU^{\top})(\bfA_1-\bfY^{(K)}_1)\Vert + O(h^{p+1})\,.
\end{align*}
Comparing terms and using Lemma~\ref{le:rkstepsapprox} gives with $(\bfI-\wh \bfU\wh \bfU^{\top})\bfY_0 = 0$
\begin{align*}
    \Vert (\bfI-\wh \bfU\wh \bfU^{\top})(\bfA_1 -  \bfY^{(K)}_1) \Vert \leq\,& h\sum_{i=1}^s |b_i| \cdot \Vert (\bfI-\wh \bfU\wh \bfU^{\top})(\bfZ_i - \widetilde \bfZ_i) \Vert \\
    \stackrel{\mathrm{Le.~\ref{le:rkstepsapprox}}}{\leq}\,& h |b_s| \cdot \Vert (\bfI-\wh \bfU\wh \bfU^{\top})(\bfZ_s - \widetilde \bfZ_s) \Vert +O(h\varepsilon_r)\,.
\end{align*}
Then, with Lemma~\ref{le:rkstepsapprox} and Lipschitz continuity of $\bfF$
\begin{align*}
    \bfZ_s =\,& \bfF(t_0 + c_s h, \bfY_0 + h\sum_{\ell = 1}^{s-1}a_{s,\ell} \bfZ_{\ell}) = \bfF_{(s)} + O(\varepsilon_r)\,,\\
    \widetilde \bfZ_s =\,& \bfF(t_0 + c_s h, \bfY_0 + h\sum_{\ell = 1}^{s-1}a_{s,\ell} \widetilde \bfZ_{\ell})\wh \bfV_0\wh \bfV_0^{\top} = \bfF_{(s)}\wh \bfV_0\wh \bfV_0^{\top} + O(\varepsilon_r)\,.
\end{align*}
Thus, with the boundedness of normal components,
\begin{align}\label{eq:proofZsDiff}
    \Vert (\bfI-\wh \bfU\wh \bfU^{\top})(\bfZ_s - \widetilde \bfZ_s) \Vert \leq\,& \Vert (\bfI-\wh \bfU\wh \bfU^{\top})\bfF_{(s)}(\bfI - \wh \bfV_0\wh \bfV_0^{\top} ) \Vert + O(\varepsilon_r)\nonumber\\
    \leq\,& \Vert (\bfI-\wh \bfU\wh \bfU^{\top})\bfP(\bfY_{(s)})\bfF_{(s)}(\bfI - \wh \bfV_0\wh \bfV_0^{\top} ) \Vert + O(\varepsilon_r)\,.
\end{align}
From Lemma~\ref{le:approxU} we know that $\bfU_{(s)}$, $\bfV_{(s)}$ are spanned by $\wh \bfU_0$, $\wh\bfV_0$, respectively. Hence, since
\begin{align*}
    \bfP(\bfY_{(s)})\bfF_{(s)} = \bfU_{(s)}\bfU_{(s)}^{\top}\bfF_{(s)} + (\bfI - \bfU_{(s)}\bfU_{(s)}^{\top})\bfF_{(s)}\bfV_{(s)}\bfV_{(s)}^{\top}
\end{align*}
the norm in \eqref{eq:proofZsDiff} vanishes, which concludes the proof. \qed
\end{proof}

We are now able to prove Theorem~\ref{th:robusterrorp} for the parallel BUG integrator.
\begin{proof}[Theorem~\ref{th:robusterrorp} for the order $p$ parallel BUG integrator]
We assume $\bfA(t_0) = \bfY_0$ and obtain the global error bound with Lady Windermere's fan using the stability of the exact problem. 
    By the definition of the augmentation step and the triangular inequality, we have
\begin{align*}
    \Vert \widehat \bfU\widehat{\bfU}^{\top}\bfA(t_1)\widehat \bfV\widehat{\bfV}^{\top} -\wh\bfY_1 \Vert \leq\,& \Vert \widehat{\bfU}_0^{\top}\bfA(t_1)\widehat \bfV_0 - \bar \bfS(t_1) \Vert + \Vert \widetilde{\bfU}_2^{\top}\bfA(t_1)\widehat \bfV_0 - \widetilde{\bfU}_2^{\top}\bfK(t_1) \Vert \\
    \,&+ \Vert \widehat \bfU_0^{\top}\bfA(t_1)\widetilde{\bfV}_2 - \bfL(t_1)^{\top}\widetilde{\bfV}_2 \Vert + \Vert \widetilde{\bfU}_2^{\top}\bfA(t_1)\widetilde{\bfV}_2\Vert\,.
\end{align*}
In the following, we bound all terms on the right individually, starting with (a) the contribution of the $S$-step, (b) the contributions of the $K$ and $L$-steps and $(c)$ the term $\Vert \widetilde{\bfU}_2^{\top}\bfA(t_1)\widetilde{\bfV}_2\Vert$.\\
(a) We bound
\begin{align*}
    \Vert \wh\bfU_0^{\top}\bfA(t_1)\wh\bfV_0 - \bar\bfS(t_1) \Vert \leq \Vert \wh\bfU_0^{\top}\bfA_1\wh\bfV_0 - \bfS_1 \Vert + O(h^{p+1})\,,
\end{align*}
where $\bfA_1$ and $\bfS_1$ are the respective approximations with an order $p$ Runge-Kutta method. Similar to the proof of Lemma~\ref{le:suff_acc_basis}, we note that with Lemma~\ref{le:rkstepsapproxS} we have
\begin{align*}
    \Vert \wh\bfU_0^{\top}\bfA_1\wh\bfV_0 - \bfS_1 \Vert \leq\,& h\sum_{i=1}^s |b_i| \cdot \Vert \wh\bfU_0^{\top}(\bfZ_i - \widetilde \bfZ_i^{(S)})\wh\bfV_0 \Vert \\
    \stackrel{\mathrm{Le.~\ref{le:rkstepsapproxS}}}{\leq}\,& h |b_s| \cdot \Vert \wh\bfU_0^{\top}(\bfZ_s - \widetilde \bfZ_s^{(S)})\wh\bfV_0 \Vert +O(h\varepsilon_r)\,.
\end{align*}
Using Lemma~\ref{le:rkstepsapprox} and Lipschitz continuity of $\bfF$ yields
\begin{align*}
    \wh \bfU_0^{\top}\bfZ_s\wh \bfV_0 =\,& \wh \bfU_0^{\top}\bfF(t_0 + c_s h, \bfY_0 + h\sum_{\ell = 1}^{s-1}a_{s,\ell} \bfZ_{\ell})\wh \bfV_0 = \wh \bfU_0^{\top}\bfF_{(s)}\wh \bfV_0 + O(hL\varepsilon_r)\,,\\
    \wh \bfU_0^{\top}\widetilde \bfZ_s^{(S)}\wh \bfV_0 =\,& \wh \bfU_0^{\top}\bfF(t_0 + c_s h, \bfY_0 + h\sum_{\ell = 1}^{s-1}a_{s,\ell} \widetilde \bfZ_{\ell}^{(S)})\wh \bfV_0 = \wh \bfU_0^{\top}\bfF_{(s)}\wh \bfV_0 + O(hL\varepsilon_r)\,.
\end{align*}
Thus, $\Vert \wh\bfU_0^{\top}\bfA(t_1)\wh\bfV_0 - \bfS(t_1) \Vert = O(h^{p+1} + h\varepsilon_r)$.

(b) The bound of the second term follows analogously to that of (a). We begin with bounding
\begin{align*}
    \Vert \bfA(t_1)\wh\bfV_0 - \bfK(t_1) \Vert \leq \Vert \bfA_1\wh\bfV_0 - \bfK_1 \Vert + O(h^{p+1})\,,
\end{align*}
where $\bfA_1$ and $\bfK_1$ are the respective approximations with an order $p$ Runge-Kutta method as already used in the proof of Lemma~\ref{le:suff_acc_basis}. Similar to that proof, we note that with Lemma~\ref{le:rkstepsapprox} we have
\begin{align*}
    \Vert \bfA_1\wh\bfV_0 - \bfK_1 \Vert \leq\,& h\sum_{i=1}^s |b_i| \cdot \Vert (\bfZ_i - \widetilde \bfZ_i)\wh\bfV_0 \Vert \\
    \stackrel{\mathrm{Le.~\ref{le:rkstepsapprox}}}{\leq}\,& h |b_s| \cdot \Vert (\bfZ_s - \widetilde \bfZ_s)\wh\bfV_0 \Vert +O(h\varepsilon_r)\,.
\end{align*}
Using Lemma~\ref{le:rkstepsapprox} and Lipschitz continuity of $\bfF$ yields
\begin{align*}
    \bfZ_s\wh \bfV_0 =\,& \bfF(t_0 + c_s h, \bfY_0 + h\sum_{\ell = 1}^{s-1}a_{s,\ell} \bfZ_{\ell})\wh \bfV_0 = \bfF_{(s)}\wh \bfV_0 + O(hL\varepsilon_r)\,,\\
    \widetilde \bfZ_s\wh \bfV_0 =\,& \bfF(t_0 + c_s h, \bfY_0 + h\sum_{\ell = 1}^{s-1}a_{s,\ell} \widetilde \bfZ_{\ell})\wh \bfV_0 = \bfF_{(s)}\wh \bfV_0 + O(hL\varepsilon_r)\,.
\end{align*}
Thus, $\Vert \widetilde{\bfU}_2^{\top}\bfA(t_1)\wh\bfV_0 - \widetilde{\bfU}_2^{\top}\bfK(t_1) \Vert \leq \Vert\bfA(t_1)\wh\bfV_0 - \bfK(t_1) \Vert = O(h^{p+1} + h\varepsilon_r)$. Analogously, we can show $\Vert \wh\bfU_0^{\top}\bfA(t_1)\widetilde{\bfV}_2 - \bfL(t_1)^{\top}\widetilde{\bfV}_2 \Vert= O(h^{p+1} + h\varepsilon_r)$.

$(c)$ To bound the term $\Vert \widetilde{\bfU}_2^{\top}\bfA(t_1)\widetilde{\bfV}_2\Vert$ we again note that, according to \cite[Theorem~6]{KiV19}, the solution of the projected Runge--Kutta method $\bfY^1_{\mathrm{PRK}} = \bfY_0 + h\sum_{i=1}^s b_i \bar \bfZ_i$ fulfills with $\mu=h(h^p+h\eps_r)$
$$
\bfA(t_1)-\bfY^1_{\mathrm{PRK}} = O(\mu).
$$
Thus, with $\widetilde{\bfU}_2^{\top}\bfY_0\widetilde{\bfV}_2 = 0$, we have
\begin{align*}
    \Vert \widetilde{\bfU}_2^{\top}\bfA(t_1)\widetilde{\bfV}_2\Vert \leq\,& \sum_{i=1}^s hb_i \Vert \widetilde{\bfU}_2^{\top}\bar \bfZ_i\widetilde{\bfV}_2\Vert + O(\mu)\\
    \stackrel{\text{Le}.~\ref{le:rkstepsapprox}}{\leq}\,&\sum_{i=1}^{s-1} hb_i \Vert \widetilde{\bfU}_2^{\top}\widehat \bfZ_i\widetilde{\bfV}_2\Vert + hb_s \Vert \widetilde{\bfU}_2^{\top}\bar \bfZ_s\widetilde{\bfV}_2\Vert + O(\mu + h\varepsilon_r)\,.
\end{align*}
Since $\widehat \bfZ_i$ are spanned by $\wh \bfU_0$ for $i = 1,\cdots,s$, the sum term is zero, and we are left with bounding
\begin{align*}
    \Vert \widetilde{\bfU}_2^{\top}\bar \bfZ_s\widetilde{\bfV}_2\Vert \stackrel{\eqref{eq:projRK}}{=} \,&\Vert \widetilde{\bfU}_2^{\top} \bfP(\bfY_{(s)})\bfF_{(s)} \widetilde{\bfV}_2\Vert \\
    =\,& \Vert \widetilde{\bfU}_2^{\top} \big(\bfU_{(s)}\bfU_{(s)}^{\top}\bfF_{(s)}(\bfI - \bfV_{(s)}\bfV_{(s)}^{\top}) + \bfF_{(s)}\bfV_{(s)}\bfV_{(s)}^{\top}\big) \widetilde{\bfV}_2\Vert\,.
\end{align*}
By definition, $\wh \bfU_0 \equiv \wh \bfU_{(s-1)}$ and $\wh \bfV_0 \equiv \wh \bfV_{(s-1)}$ and, according to Lemma~\ref{le:approxU}, $\bfU_{(s)}$ is spanned by $\wh \bfU_{(s-1)}$ and $\bfV_{(s)}$ is spanned by $\wh \bfV_{(s-1)}$. Thus, $\widetilde{\bfU}_2^{\top} \bfU_{(s)} = \bfV_{(s)}^{\top} \widetilde{\bfV}_2 = 0$, and therefore $\Vert \widetilde{\bfU}_2^{\top}\bar \bfZ_s\widetilde{\bfV}_2\Vert = 0$. 

Therefore, with (a), (b), and (c), we conclude that $\Vert \widehat \bfU\widehat{\bfU}^{\top}\widetilde \bfA(t_1)\widehat \bfV\widehat{\bfV}^{\top} -\wh\bfY_1 \Vert = O(h(h^p + \varepsilon_r))$. We obtain the desired error bound with $\Vert \wh\bfY_1 - \bfY_1 \Vert \leq \vartheta$.\qed
\end{proof}

\begin{remark}\label{re:local_cons_par}
    By proving a bound for $\Vert\bfA(t_1)\wh\bfV_0 - \bfK(t_1) \Vert$, we have shown that the $S$-step of the parallel integrator can be replaced by $\bar\bfS_1 = \wh\bfU_0^{\top}\bfK(t_1)$, therefore only requiring two parallel rank $(s-1)r$ solves. Besides an improved efficiency, this yields a locally conservative integrator, following \cite{einkemmer2023conservation}.
\end{remark}

\section{Implementation details: BUG-$p$ integrator}
\label{sec:impl_bugp}

We describe one time step of the order $p$ augmented BUG integrator, denoted
BUG-$p$, from $t_n$ to $t_{n+1}=t_n+h$. As usual, let
$
    \bfY_n=\bfU_n\bfS_n\bfV_n^\top,\,
$$
    \bfU_n^\top\bfU_n=\bfI,
$  and  $
    \bfV_n^\top\bfV_n=\bfI .
$

The augmented basis is assembled with an RK basis constructor. This constructor
uses an explicit $s$-stage Runge--Kutta method of order $p$ only to generate the
basis information. The Galerkin $S$-step below is not required to use this
Runge--Kutta method.

Let $[\![\,\cdot\,]\!]_{r_n}$ denote the truncation to rank $r_n$, and let
$[\![\,\cdot\,]\!]_{\vartheta}$ denote the final truncation with tolerance
$\vartheta$.
The BUG-$p$ step consists of the following operations.

\begin{enumerate}
\item {\bf RK basis construction.}
Compute augmented orthonormal bases
\[
    (\wt\bfU,\wt\bfV)
    =
    {\tt RKBasisConstructor}
    (t_n,h,\bfU_n,\bfS_n,\bfV_n;\,a_{\ell j},c_\ell,s),
\]
where the coefficients $a_{\ell j}$ and $c_\ell$ are those of an explicit
$s$-stage Runge--Kutta method of order $p$, see Section \ref{sec_rk_basis_constr_bugp} for details.
It returns bases of the form
\[
\begin{aligned}
    \wt\bfU
    &=
    {\rm orth}
    ([\bfU_n,\bfF_{(1)}\bfV_{(1)},\ldots,\bfF_{(s)}\bfV_{(s)}]), \\
    \wt\bfV
    &=
    {\rm orth}
    ([\bfV_n,\bfF_{(1)}^\top\bfU_{(1)},\ldots,
      \bfF_{(s)}^\top\bfU_{(s)}]).
\end{aligned}
\]
Thus, without removing linear dependencies, the augmented bases have 
$\bar{r}\leq (s+1)r_n$ columns.  

\item {\bf Projection step.}
Project the current approximation into the augmented space using the $\bar{r}\times r_n$ matrices
$
    \wt\bfM=\wt\bfU^\top\bfU_n,
$
and
$
    \wt\bfN=\wt\bfV^\top\bfV_n,
$
\,i.e.,
\[
    \wt\bfS_n
    =
    \wt\bfU^\top\bfY_n\wt\bfV
    =
    \wt\bfM\bfS_n\wt\bfN^\top .
\]
Hence $\wt\bfU\wt\bfS_n\wt\bfV^\top$ is the orthogonal projection of
$\bfY_n$ onto the tensor-product space spanned by $\wt\bfU$ and $\wt\bfV$.

\item {\bf Galerkin $S$-step.}
Integrate the reduced matrix differential equation
\begin{equation}
\label{eq:bugp-S-step}
    \dt{\wt\bfS}(t)
    =
    \wt\bfU^\top
    \bfF(t,\wt\bfU\,\wt\bfS(t)\wt\bfV^\top)
    \wt\bfV,
    \qquad
    \wt\bfS(t_n)=\wt\bfS_n,
    \qquad
    t\in[t_n,t_{n+1}].
\end{equation}
The integrator used for \eqref{eq:bugp-S-step} is independent of the explicit
Runge--Kutta method used in the basis constructor. To retain order $p$, it is
sufficient to solve the reduced equation with a method whose local error is
$O(h^{p+1})$, for instance with a method of order at least $p$ for the reduced
problem. Thus, explicit methods may be used for non-stiff reduced problems,
whereas implicit, exponential, or otherwise stiffly stable methods can be used
when the reduced problem is stiff. If an order $q<p$ method is used, the overall
order is reduced accordingly.

Denote the computed coefficient matrix at $t_{n+1}$ by $\wt\bfS_{n+1}$ and set
\[
    \wt\bfY_{n+1}
    =
    \wt\bfU\,\wt\bfS_{n+1}\wt\bfV^\top .
\]

\item {\bf Truncation.}
Compress the augmented solution:
\[
    \bfY_{n+1}
    =
    \bfU_{n+1}\bfS_{n+1}\bfV_{n+1}^\top
    =
    [\![\wt\bfY_{n+1}]\!]_{\vartheta}.
\]
\end{enumerate}

\subsection{RK basis constructor.}\label{sec_rk_basis_constr_bugp}
We now describe the RK basis constructor used in Step~1. The constructor is most
naturally viewed as a sequence of projected BUG stage steps. Each stage performs
a basis augmentation, a Galerkin projection of the Runge--Kutta stage
combination, and a truncation back to the current rank. The resulting stage
value is then used to generate the next basis direction.

Set
$
    \bfY_{(1)}=\bfY_n
  $, $
    \bfU_{(1)}=\bfU_n  $, $
    \bfS_{(1)}=\bfS_n  $, and $
    \bfV_{(1)}=\bfV_n 
$.
For $\ell=1,\ldots,s$, let
$
    t_{(\ell)} = t_n+c_\ell h $, and $
    \bfF_{(\ell)}
    =
    \bfF(t_{(\ell)},\bfY_{(\ell)}).
$
Compute the two stage directions
\[
    \bfG_\ell=\bfF_{(\ell)}\bfV_{(\ell)},
    \qquad
    \bfH_\ell=\bfF_{(\ell)}^\top\bfU_{(\ell)} .
\]
The tangent-space projected increment is stored implicitly as
\begin{equation}
\label{eq:projected-stage-increment-factorized}
    \bar\bfZ_\ell
    =
    \bfP(\bfY_{(\ell)})\bfF_{(\ell)}
    =
    \bfU_{(\ell)}\bfH_\ell^\top
    +
    \bfG_\ell\bfV_{(\ell)}^\top
    -
    \bfU_{(\ell)}\bfC_\ell\bfV_{(\ell)}^\top,
    \qquad
    \bfC_\ell=\bfU_{(\ell)}^\top\bfG_\ell .
\end{equation}
Thus no full matrix representation of $\bar\bfZ_\ell$ is needed.

If $\ell<s$, the next Runge--Kutta stage is computed by a BUG-type stage step.

\begin{enumerate}
\item {\bf Stage basis augmentation.}
Construct
\[
\begin{aligned}
    \wh\bfU_{(\ell+1)}
    &=
    {\rm orth}
    ([\bfU_n,\bfG_1,\ldots,\bfG_\ell]), \\
    \wh\bfV_{(\ell+1)}
    &=
    {\rm orth}
    ([\bfV_n,\bfH_1,\ldots,\bfH_\ell]).
\end{aligned}
\]
Lemma~\ref{le:approxU} shows that the previously computed stage bases
$\bfU_{(j)}$ and $\bfV_{(j)}$, $j\le \ell$, are already contained in these
spans. Hence they do not have to be appended explicitly.

\item {\bf Stage Galerkin projection.}
Project the Runge--Kutta stage combination into the augmented stage space:
\begin{equation}
\label{eq:bugp-stage-projection}
    \wh\bfS_{(\ell+1)}
    =
    \wh\bfU_{(\ell+1)}^\top
    \left(
        \bfY_n
        +
        h\sum_{j=1}^{\ell}a_{\ell+1,j}\bar\bfZ_j
    \right)
    \wh\bfV_{(\ell+1)} .
\end{equation}
All terms in \eqref{eq:bugp-stage-projection} are evaluated from the low-rank
factors. In particular, the products with $\bar\bfZ_j$ use
\eqref{eq:projected-stage-increment-factorized}.

\item {\bf Stage truncation.}
Set
\[
    \wh\bfY_{(\ell+1)}
    =
    \wh\bfU_{(\ell+1)}
    \wh\bfS_{(\ell+1)}
    \wh\bfV_{(\ell+1)}^\top,
\]
and truncate to the current rank:
\[
    \bfY_{(\ell+1)}
    =
    \bfU_{(\ell+1)}
    \bfS_{(\ell+1)}
    \bfV_{(\ell+1)}^\top
    =
    [\![\wh\bfY_{(\ell+1)}]\!]_{r_n}.
\]
\end{enumerate}

After the final stage direction has been computed, the constructor returns
\begin{equation}
\label{eq:bugp-final-rk-basis}
\begin{aligned}
    \wt\bfU
    &=
    {\rm orth}
    ([\bfU_n,\bfG_1,\ldots,\bfG_s]), \\
    \wt\bfV
    &=
    {\rm orth}
    ([\bfV_n,\bfH_1,\ldots,\bfH_s]).
\end{aligned}
\end{equation}

\begin{algorithm}[t]
\caption{RK basis constructor for BUG-$p$}
\label{alg:rk_basis_constructor_bug_form}
\begin{algorithmic}[1]
\State \textbf{Input:}
$t_n,h,\bfU_n,\bfS_n,\bfV_n$; explicit RK coefficients $a_{\ell j},c_\ell$.  Number of RK stages $s$.
\State \textbf{Output:}
Augmented bases $\wt\bfU,\wt\bfV$.
\State
$\bfU_{(1)}\gets\bfU_n$,
$\bfS_{(1)}\gets\bfS_n$,
$\bfV_{(1)}\gets\bfV_n$.
\For{$\ell=1,\ldots,s$}
    \State
    $t_{(\ell)}\gets t_n+c_\ell h$.
    \State
    $\bfF_{(\ell)}
    \gets
    \bfF(t_{(\ell)},
    \bfU_{(\ell)}\bfS_{(\ell)}\bfV_{(\ell)}^\top)$.
    \State
    $\bfG_\ell\gets\bfF_{(\ell)}\bfV_{(\ell)}$,
    $\bfH_\ell\gets\bfF_{(\ell)}^\top\bfU_{(\ell)}$,
    $\bfC_\ell\gets\bfU_{(\ell)}^\top\bfG_\ell$.
    \If{$\ell<s$}
        \State
        $\wh\bfU_{(\ell+1)}
        \gets
        {\rm orth}([\bfU_n,\bfG_1,\ldots,\bfG_\ell])$.
        \State
        $\wh\bfV_{(\ell+1)}
        \gets
        {\rm orth}([\bfV_n,\bfH_1,\ldots,\bfH_\ell])$.
        \State
        Use  $
            \bar\bfZ_j
            =
            \bfU_{(j)}\bfH_j^\top
            +
            \bfG_j\bfV_{(j)}^\top
            -
            \bfU_{(j)}\bfC_j\bfV_{(j)}^\top
       $ to compute
        \[
            \wh\bfS_{(\ell+1)}
            =
            \wh\bfU_{(\ell+1)}^\top
            \left(
                \bfY_n
                +
                h\sum_{j=1}^{\ell}a_{\ell+1,j}\bar\bfZ_j
            \right)
            \wh\bfV_{(\ell+1)}
        \]

        \State
        $\bfY_{(\ell+1)}
        =
        \bfU_{(\ell+1)}
        \bfS_{(\ell+1)}
        \bfV_{(\ell+1)}^\top
        \gets
        [\![
        \wh\bfU_{(\ell+1)}
        \wh\bfS_{(\ell+1)}
        \wh\bfV_{(\ell+1)}^\top
        ]\!]_{r_n}$.
    \EndIf
\EndFor
\State
$\wt\bfU\gets{\rm orth}([\bfU_n,\bfG_1,\ldots,\bfG_s])$.
\State
$\wt\bfV\gets{\rm orth}([\bfV_n,\bfH_1,\ldots,\bfH_s])$.
\State
\Return $(\wt\bfU,\wt\bfV)$.
\end{algorithmic}
\end{algorithm}

\begin{remark}[Low-rank implementation]
All operations in the RK basis constructor can be performed in low-rank format,
provided that the right-hand side admits the required projected products. In
particular, the full matrices $\bfY_{(\ell)}$ and $\bar\bfZ_\ell$ do not have to
be formed. The constructor only requires products of the form
\[
    \bfF_{(\ell)}\bfV_{(\ell)},\qquad
    \bfF_{(\ell)}^\top\bfU_{(\ell)},
\]
together with small projected matrices such as
\[
    \wh\bfU_{(\ell+1)}^\top
    \bar\bfZ_j
    \wh\bfV_{(\ell+1)} .
\]

\end{remark}

\begin{remark}[Memory optimization]
The basis constructor  keeps all stage direction
blocks $\bfG_\ell=\bfF_{(\ell)}\bfV_{(\ell)}$ and
$\bfH_\ell=\bfF_{(\ell)}^\top\bfU_{(\ell)}$, $\ell=1,\ldots,s$. Thus it does
not exploit possible zero coefficients in the Butcher tableau. A smaller
constructor may be used if the retained blocks still satisfy the stage-span
property required in Lemma~\ref{le:approxU}. More precisely, whenever a retained
stage direction $\bar\bfZ_\ell$ is used, the corresponding bases
$\bfU_{(\ell)}$ and $\bfV_{(\ell)}$ must be contained in the retained augmented
spaces. This can be ensured either by retaining the preceding direction blocks
needed by Lemma~\ref{le:approxU}, or by explicitly including the retained bases
such as $\bfU_{(b)}$ and $\bfV_{(b)}$.

Consequently, zero entries $a_{\ell+1,j}=0$ may be used to omit the
corresponding increment from the temporary stage combination. Similarly, a stage
with $b_\ell=0$ may be omitted from the final basis only if it is not needed,
directly or indirectly, to span later retained stages. Otherwise its contribution
must be carried either through the direction blocks
$\bfG_\ell,\bfH_\ell$ or through suitable retained bases.
\end{remark}

\section{Implementation details: parallel BUG-$p$ integrator}
\label{sec:impl_parallel_bugp}

We describe one time step of the order $p$ parallel BUG integrator, denoted
parallel BUG-$p$, from $t_n$ to $t_{n+1}=t_n+h$. As usual, let
$
    \bfY_n=\bfU_n\bfS_n\bfV_n^\top $ with $
    \bfU_n^\top\bfU_n=\bfI,$ and $
    \bfV_n^\top\bfV_n=\bfI .
$
The method first constructs pre-augmented bases
$\widehat\bfU_0$ and $\widehat\bfV_0$ by an RK pre-basis constructor, analogously to Section \ref{sec_rk_basis_constr_bugp}, with the difference that the
constructor uses an explicit $s$-stage Runge--Kutta method of order $p$ only to
generate basis information and stops after the $(s-1)^{st}$ stage.
The parallel BUG-$p$ step consists of the following operations.

\begin{enumerate}
\item {\bf RK pre-basis construction.}
Compute pre-augmented orthonormal bases
\[
    (\widehat\bfU_0,\widehat\bfV_0)
    =
    {\tt RKPreBasisConstructor}
    (t_n,h,\bfU_n,\bfS_n,\bfV_n;\,a_{\ell j},c_\ell, s),
\]
where the coefficients $a_{\ell j}$ and $c_\ell$ are those of the explicit
$s$-stage Runge--Kutta method used for basis generation. The constructor returns
bases of the form
\[
\begin{aligned}
    \widehat\bfU_0
    &=
    {\rm orth}
    ([\bfU_n,\bfF_{(1)}\bfV_{(1)},\ldots,
      \bfF_{(s-1)}\bfV_{(s-1)}]), \\
    \widehat\bfV_0
    &=
    {\rm orth}
    ([\bfV_n,\bfF_{(1)}^\top\bfU_{(1)},\ldots,
      \bfF_{(s-1)}^\top\bfU_{(s-1)}]).
\end{aligned}
\]
Thus, without removing linear dependencies, the augmented bases have 
$\hat{r}\leq sr_n$ columns.  

\item {\bf Parallel $K$-, $L$-, and $S$-steps.}
The following three matrix differential equations are independent after
$\widehat\bfU_0$ and $\widehat\bfV_0$ have been constructed and can therefore be
solved in parallel.

The $K$-step is
\begin{equation}
\label{eq:parallel_bugp_K_step}
    \dt{\bfK}(t)
    =
    \bfF(t,\bfK(t)\widehat\bfV_0^\top)\widehat\bfV_0,
    \qquad
    \bfK(t_n)=\bfU_n\bfS_n\bfV_n^{\top}\wh\bfV_0,
    \qquad
    t\in[t_n,t_{n+1}].
\end{equation}
The $L$-step is
\begin{equation}
\label{eq:parallel_bugp_L_step}
    \dt{\bfL}(t)
    =
    \bfF(t,\widehat\bfU_0\bfL(t)^\top)^\top\widehat\bfU_0,
    \qquad
    \bfL(t_n)=\bfV_n\bfS_n^{\top}\bfU_n^{\top}\wh\bfU_0,
    \qquad
    t\in[t_n,t_{n+1}].
\end{equation}
The Galerkin $S$-step in the pre-augmented tensor-product space is
\begin{equation}
\label{eq:parallel_bugp_S_step}
    \dt{\bar{\bfS}}(t)
    =
    \widehat\bfU_0^\top
    \bfF(t,\widehat\bfU_0\bar{\bfS}(t)\widehat\bfV_0^\top)
    \widehat\bfV_0,
    \qquad
    \bar{\bfS}(t_n)=\wh\bfV_0^{\top}\bfV_n\bfS_n\bfU_n^{\top}\wh\bfU_0,
    \qquad
    t\in[t_n,t_{n+1}].
\end{equation}
To retain order $p$, it is sufficient to solve these subproblems
with any time integration method whose local errors are $O(h^{p+1})$. 
Denote the computed endpoint values by
\[
    \bfK_{n+1}\approx\bfK(t_{n+1}),\qquad
    \bfL_{n+1}\approx\bfL(t_{n+1}),\qquad
    \bar{\bfS}_{n+1}\approx\bar{\bfS}(t_{n+1}).
\]

\item {\bf Final basis augmentation.}
Extract the new basis information generated by the $K$- and $L$-steps:
\[
    \widetilde\bfU_2
    =
    {\rm orth}
    \bigl((\bfI-\widehat\bfU_0\widehat\bfU_0^\top)\bfK_{n+1}\bigr),
    \qquad
    \widetilde\bfV_2
    =
    {\rm orth}
    \bigl((\bfI-\widehat\bfV_0\widehat\bfV_0^\top)\bfL_{n+1}\bigr).
\]
If one of the projected blocks is numerically rank deficient, linearly
dependent columns are discarded. Set
$
    \widehat\bfU=[\widehat\bfU_0,\widetilde\bfU_2]$, and $
    \widehat\bfV=[\widehat\bfV_0,\widetilde\bfV_2]
$. 
The untruncated final representation may therefore have more columns than the
pre-augmented bases. However, the expensive projected time evolutions
\eqref{eq:parallel_bugp_K_step}--\eqref{eq:parallel_bugp_S_step}
are performed only at the pre-augmented ranks $\hat{r}\le sr_n$.

\item {\bf Assembly of the augmented coefficient matrix.}
Assemble the coefficient matrix in the final augmented bases as
\begin{equation}
\label{eq:parallel_bugp_assembled_S}
    \widehat\bfS_{n+1}^{\rm par}
    =
    \begin{pmatrix}
        \bar{\bfS}_{n+1}
        &
        \bfL_{n+1}^\top\widetilde\bfV_2
        \\
        \widetilde\bfU_2^\top\bfK_{n+1}
        &
        \bf0
    \end{pmatrix}.
\end{equation}
Thus the uncompressed endpoint approximation is
$
    \widehat\bfY_{n+1}^{\rm par}
    =
    \widehat\bfU\,
    \widehat\bfS_{n+1}^{\rm par}
    \widehat\bfV^\top .
$

\item {\bf Truncation.}
Compress the augmented solution:
\[
    \bfY_{n+1}
    =
    \bfU_{n+1}\bfS_{n+1}\bfV_{n+1}^\top
    =
    [\![\widehat\bfY_{n+1}^{\rm par}]\!]_{\vartheta}.
\]
\end{enumerate}

\begin{algorithm}[t]
\caption{Order $p$ parallel BUG integrator (parallel BUG-$p$)}
\label{alg:parallel_bugp}
\begin{algorithmic}[1]
\State \textbf{Input:}
$t_n,h,\bfU_n,\bfS_n,\bfV_n$; explicit RK coefficients
$a_{\ell j},c_\ell$ for the pre-basis constructor; order-$p$ solvers for the
$K$-, $L$-, and $S$-steps; truncation tolerance $\vartheta$. Number of RK stages $s$.
\State \textbf{Output:}
$\bfU_{n+1},\bfS_{n+1},\bfV_{n+1}$.

\Statex
\State
$(\widehat\bfU_0,\widehat\bfV_0)
\gets
{\tt RKPreBasisConstructor}
(t_n,h,\bfU_n,\bfS_n,\bfV_n;\,a_{\ell j},c_\ell,s)$.
\State
$\widehat\bfM\gets\widehat\bfU_0^\top\bfU_n$,
$\widehat\bfN\gets\widehat\bfV_0^\top\bfV_n$.

\Statex
\State
$\bfK_n\gets \bfU_n\bfS_n\widehat\bfN^\top$.
\State
$\bfL_n\gets \bfV_n\bfS_n^\top\widehat\bfM^\top$.
\State
$\bar{\bfS}_n\gets \widehat\bfM\bfS_n\widehat\bfN^\top$.

\Statex
\State
Compute $\bfK_{n+1}$ by solving
\[
    \dt{\bfK}(t)
    =
    \bfF(t,\bfK(t)\widehat\bfV_0^\top)\widehat\bfV_0,
    \qquad
    \bfK(t_n)=\bfK_n .
\]
\State
Compute $\bfL_{n+1}$ by solving
\[
    \dt{\bfL}(t)
    =
    \bfF(t,\widehat\bfU_0\bfL(t)^\top)^\top\widehat\bfU_0,
    \qquad
    \bfL(t_n)=\bfL_n .
\]
\State
Compute $\bar{\bfS}_{n+1}$ by solving
\[
    \dt{\bar{\bfS}}(t)
    =
    \widehat\bfU_0^\top
    \bfF(t,\widehat\bfU_0\bar{\bfS}(t)\widehat\bfV_0^\top)
    \widehat\bfV_0,
    \qquad
    \bar{\bfS}(t_n)=\bar{\bfS}_n .
\]

\Statex
\State
$\widetilde\bfU_2
\gets
{\rm orth}\bigl((\bfI-\widehat\bfU_0\widehat\bfU_0^\top)\bfK_{n+1}\bigr)$.
\State
$\widetilde\bfV_2
\gets
{\rm orth}\bigl((\bfI-\widehat\bfV_0\widehat\bfV_0^\top)\bfL_{n+1}\bigr)$.
\State
$\widehat\bfU\gets[\widehat\bfU_0,\widetilde\bfU_2]$,
$\widehat\bfV\gets[\widehat\bfV_0,\widetilde\bfV_2]$.

\Statex
\State
\[
    \widehat\bfS_{n+1}^{\rm par}
    \gets
    \begin{pmatrix}
        \bar{\bfS}_{n+1}
        &
        \bfL_{n+1}^\top\widetilde\bfV_2
        \\
        \widetilde\bfU_2^\top\bfK_{n+1}
        &
        \bf0
    \end{pmatrix}.
\]
\State
\[
    \bfY_{n+1}
    =
    \bfU_{n+1}\bfS_{n+1}\bfV_{n+1}^\top
    \gets
    [\![
        \widehat\bfU
        \widehat\bfS_{n+1}^{\rm par}
        \widehat\bfV^\top
    ]\!]_{\vartheta}.
\]
\State \Return $(\bfU_{n+1},\bfS_{n+1},\bfV_{n+1})$.
\end{algorithmic}
\end{algorithm}

The parallel integrator is especially relevant for tensor-valued differential equations where multiple factors need to be solved in parallel. However, it has some distinct advantages for matrix-valued problems as well: For GPU computations, truncation steps that must be computed in the basis constructor as well as after the time update become a main bottleneck \cite{boukaram2017batched}. In this scenario, the repeated truncations in the basis constructor can become infeasible, leading to a doubling of the rank in each step of the constructor. In that case, instead of reducing the basis by only $r$ vectors, the parallel integrator halves the number of basis vectors, thus leading to significantly smaller differential equations.

\section{Numerical results}\label{sec:numerics}

 {In the following, we present numerical experiments for the proposed integrators. All numerical results can be reproduced with the openly available Julia code \cite{code}.}

\subsection{Non-stiff Schr\"odinger equation}
\label{sec:num_nonstiff_schroedinger}

\begin{figure}[h]
    \centering

    \begin{subfigure}[t]{0.49\textwidth}
        \centering
        \includegraphics[width=\textwidth]{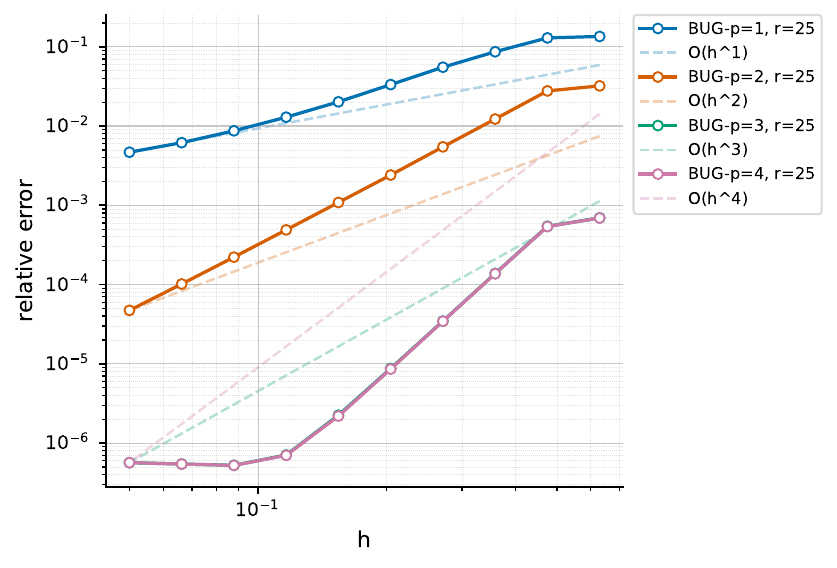}
        \caption{BUG-$p$, RK-8.}
        \label{fig:num_nonstiff_schroedinger_bugp}
    \end{subfigure}
    \hfill
    \begin{subfigure}[t]{0.49\textwidth}
        \centering
        \includegraphics[width=\textwidth]{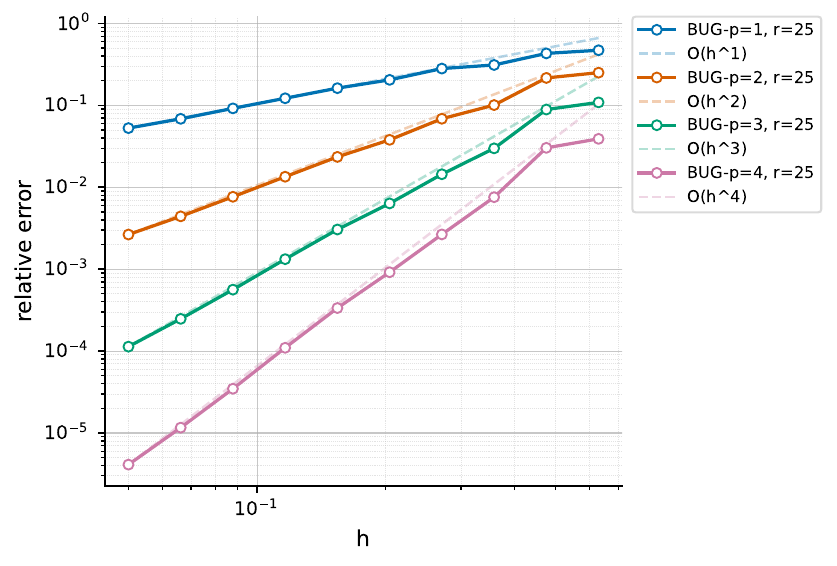}
        \caption{BUG-$p$, same order.}
        \label{fig:num_nonstiff_schroedinger_bugp_same_order}
    \end{subfigure}

    \vspace{0.5em}

    \begin{subfigure}[t]{0.49\textwidth}
        \centering
        \includegraphics[width=\textwidth]{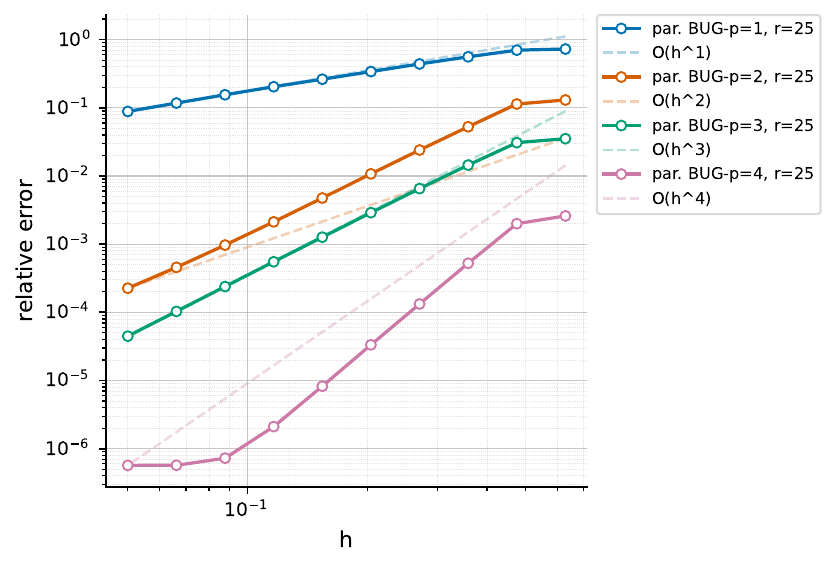}
        \caption{Parallel BUG-$p$, RK-8.}
        \label{fig:num_nonstiff_schroedinger_parallel_bugp}
    \end{subfigure}
    \hfill
    \begin{subfigure}[t]{0.49\textwidth}
        \centering
        \includegraphics[width=\textwidth]{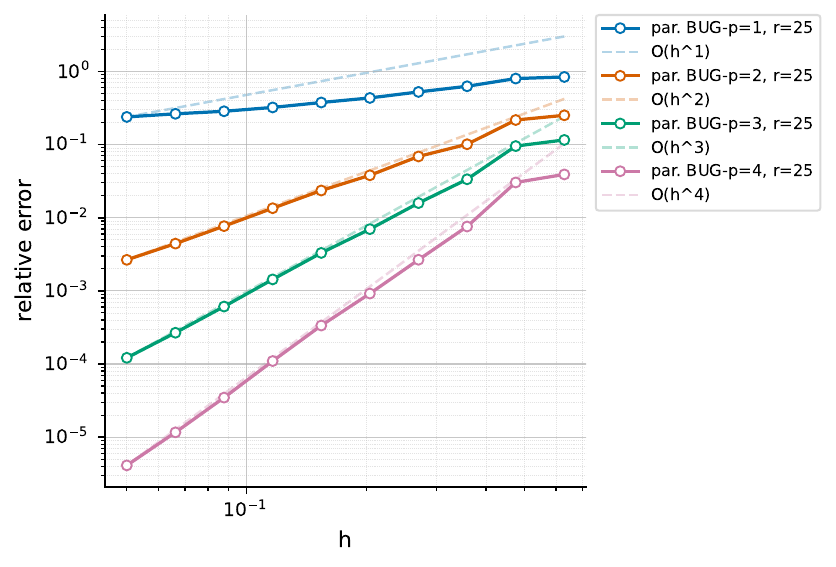}
        \caption{Parallel BUG-$p$, same order.}
        \label{fig:num_nonstiff_schroedinger_parallel_bugp_same_order}
    \end{subfigure}

    \vspace{0.5em}

    \begin{subfigure}[t]{0.49\textwidth}
        \centering
        \includegraphics[width=\textwidth]{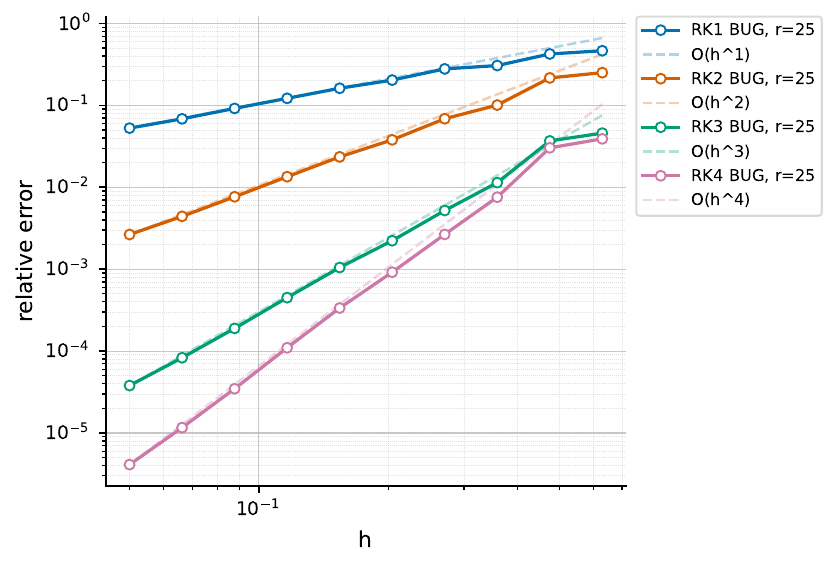}
        \caption{RK-BUG \cite{nobile2025robust}.}
        \label{fig:num_nonstiff_schroedinger_rkbug}
    \end{subfigure}

    \caption{Convergence for the non-stiff Schr\"odinger equation
    \eqref{eq:discr_schoedinger}. The panels show the
    final-time relative error for the proposed BUG-$p$ and parallel BUG-$p$
    methods together with their corresponding same-order variants, compared
    with the Runge--Kutta BUG method of \cite{nobile2025robust}. }
    \label{fig:num_nonstiff_schroedinger}
\end{figure}

\begin{figure}[t]
    \centering
    \includegraphics[width=0.49\linewidth]{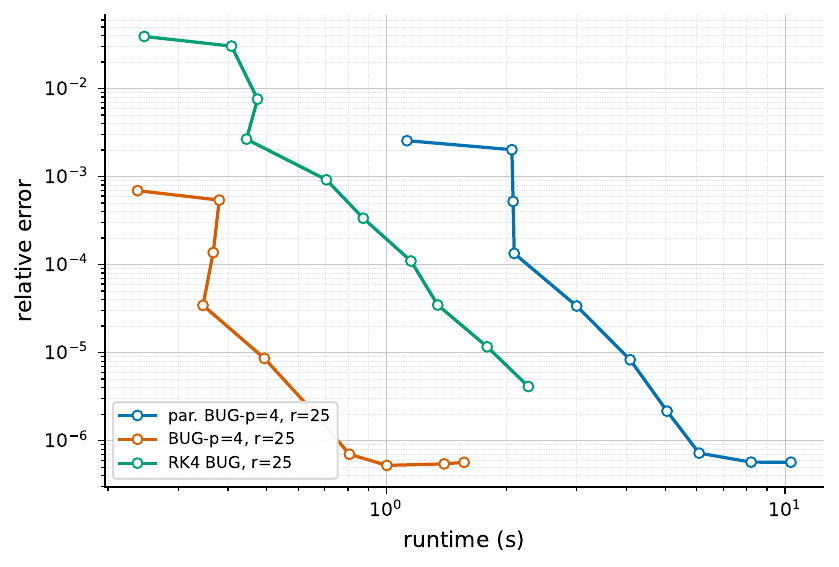}
    \includegraphics[width=0.49\linewidth]{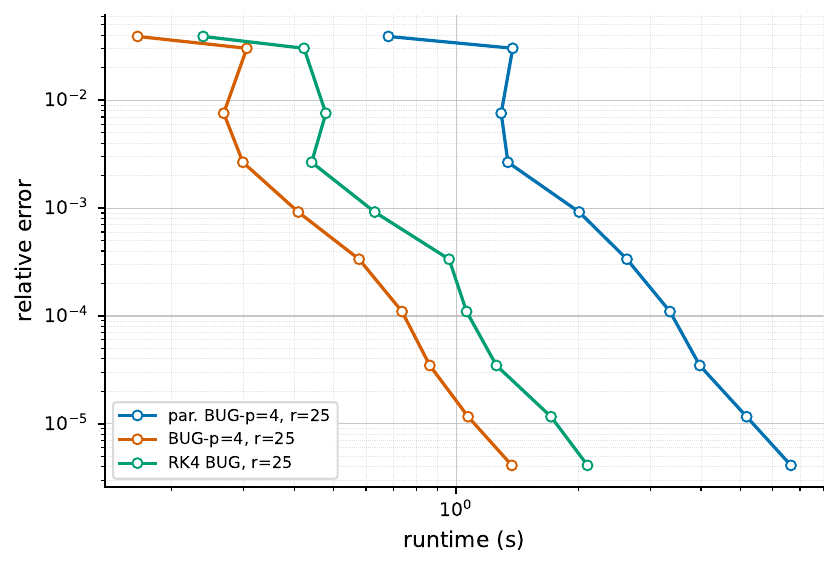}
    \caption{Efficiency of different low-rank integrators of order $p=4$ for the non-stiff Schrödinger equations \eqref{eq:discr_schoedinger}. Each marker denotes one time-step choice. Left: Evolution equations are solved with an order $8$ Runge--Kutta scheme. Right: Evolution equations are solved with a Runge--Kutta scheme of the same order as the BUG integrators. BUG-p has the best error-to-run-time inflection point, compared to the RK-BUG and parallel BUG-p in both scenarios. }
    \label{fig:efficiency_nonstiff_schroedinger}
\end{figure}

Following \cite{ceruti2024robust}, we first consider a non-stiff Schr\"odinger-type equation on the two-dimensional torus $\torus^2=[-\pi,\pi)^2$. The problem takes the form
\begin{align}
	   i\dot{\bfA}(t) = \bfH[\bfA(t)],
	\label{eq:discr_schoedinger}
\end{align}
where the right-hand side is given as
\begin{align*}
	&\bfH[\bfA] = -\frac{1}{2} \Big(\bfD \bfA + \bfA \bfD^\top \Big) + \bfV_\text{cos}\bfA\bfV_\text{cos} \in \R^{n \times n}\,,
	\\
	& \bfD = \texttt{tridiag}(-1,2,-1) + \mathbf{e}_1\mathbf{e}_n^{\top} + \mathbf{e}_n\mathbf{e}_1^{\top} \in \R^{n \times n} \,, 
	\\
	& \bfV_\text{cos} = \text{diag}(1-\cos(2\pi j / n)) \quad j = -n/2, \cdots, n/2 - 1 \,,
\end{align*}
with the $j$-th unit vector denoted as $\mathbf{e}_j$. This corresponds to the spatial discretization of the non-stiff Schrödinger equation using a standard tensor-product centered finite
difference discretization. The initial condition is chosen as
\[
    \bfA(0)=\bfU_0\bfS_0\bfV_0^*,
    \qquad
    \bfS_0=\diag(10^{-1},10^{-2},\ldots,10^{-N}),
\]
where $\bfU_0$ and $\bfV_0$ are random orthonormal matrices generated with a fixed random seed. This problem is evolved until a final time of $T=0.5$.

For the time integration, we compare the proposed augmented
and parallel BUG-$p$ integrators with the Runge--Kutta BUG method
of \cite{nobile2025robust}. Since the spatially discretized problem is
non-stiff, the reduced subproblems are solved with explicit time integrators. Here, we investigate the effects of using a matching Runge--Kutta method to solve the $K$, $L$, and $S$ equations (e.g., for a BUG-$p$ integrator we use an order $p$ Runge--Kutta method) as well as a fixed order of $8$ (RK-8). As remarked, the parallel BUG-$p$ integrator is beneficial when no truncation is computed in the basis construction. Thus, we do not compute singular value decompositions in the construction of the basis, leading to an increased rank for the $K$, $L$, and $S$-steps. Moreover, we use a serial implementation of all methods; thus, we do not leverage parallelism of the integrator. The reference solution is obtained by integrating the full
matrix differential equation with a high-accuracy adaptive solver, and errors
are measured at the final time $T$ as
\begin{align}\label{eq:final_time_error}
    \frac{\|\bfY(T)-\bfA_{\rm ref}(T)\|}{\|\bfA_{\rm ref}(T)\|}\,.
\end{align}

\paragraph{Interpretation of the results.}
 {The resulting convergence behaviour is recorded in Figure~\ref{fig:num_nonstiff_schroedinger}, where results in the left column are computed with an underlying Runge--Kutta method of fixed order (RK-$8$). Results in the right column are computed with a Runge--Kutta method of matching order. We note that the RK-BUG method does not require solving partial differential equations and thus does not allow the freedom to choose a specific time integration method for the substeps. It is observed that the use of RK-$8$ can have two effects. First, the error constant is significantly decreased, especially for larger ranks. Second, the convergence rate can be increased when the underlying time integration method of the resulting matrix ODEs dominates convergence. An increased order of convergence can, for example, be seen for the BUG-$3$ method with RK-$8$, where the convergence matches that of the BUG-$4$ method. These two observations underline, already for non-stiff problems, the benefit of decoupling the time integration method from the underlying integrator used to construct the basis. The corresponding efficiency is depicted in Figure~\ref{fig:efficiency_nonstiff_schroedinger}. We wish to underline that the runtime of each method is highly implementation- and hardware-dependent. In our numerical experiments, we observe that the BUG-$p$ integrator tends to outperform both the RK-BUG and the parallel BUG-$p$ methods. This is expected, since the parallel BUG-$p$ method does not perform truncations in the basis construction, which is expected to be beneficial for GPU parallelization, while the differential equations are evolved sequentially. Moreover, we anticipate this integrator to be especially relevant for tensor-valued differential equations or when combined with multistep methods. The RK-BUG integrator exhibits larger error constants (compared to the BUG-$p$ method used with an RK-$8$ time integrator) despite using additional basis matrices. The effect of the smaller number of basis functions becomes evident when comparing the BUG-$4$ integrator with RK-$4$ against the RK-BUG method. This comparison reveals costs from additional bases, since RK-BUG uses the same inherent time integrator (RK-$4$).}

\subsection{Stiff heat equation}
\label{sec:num_stiff_heat}

In the following, we highlight the methods' behaviours for a stiff test case. While our assumptions do not hold in such a setting, it has been observed in a large number of works that BUG integrators show good convergence behaviour in these settings. A rigorous error bound is, however, currently unavailable for any kind of BUG integrator. Following \cite{ceruti2024robust}, we consider a forced heat equation on the
two-dimensional torus $\torus^2=[-\pi,\pi)^2$. Let
$a:[0,T]\times\torus^2\to\R$, $T=2$, denote the solution. The continuous problem
is
\begin{equation}
\label{eq:num_stiff_heat_pde}
    \partial_t a(t,x,y)
    =
    \frac12\partial_{xx}a(t,x,y)
    +
    \frac12\partial_{yy}a(t,x,y)
    +
    g(x,y),
    \qquad (x,y)\in\torus^2,
\end{equation}
with periodic boundary conditions in both spatial variables. The forcing is
time independent and is given by
\begin{equation}
\label{eq:num_stiff_heat_source}
    g(x,y)
    =
    \sum_{k=1}^{11}
    10^{-(k-1)}
    \exp\!\left(-k(x^2+y^2)\right).
\end{equation}
The initial condition is
\begin{equation}
\label{eq:num_stiff_heat_initial}
    a(0,x,y)=\sin(x)\sin(y).
\end{equation}

The spatial discretization is a standard tensor-product centered finite
difference discretization with periodic boundary conditions on an $N\times N$ grid,
with $N=128$, where, in contrast to the non-stiff results in Section~\ref{sec:num_stiff_heat}, the discrete Laplacian is scaled with the factor
$\Delta x^{-2}$. This yields a stiff matrix differential equation of the form
\eqref{eq:origProb},
\[
    \dt{\bfA}(t)=\bfF(\bfA(t)),
    \qquad
    \bfA(t)\in\R^{N\times N},
\]
where $\bfA(t)$ contains the nodal values of $a(t,\cdot,\cdot)$. In compact
matrix notation, the semi-discrete equation is
\begin{equation}
\label{eq:num_stiff_heat_matrix_ode}
    \dt{\bfA}(t)
    =
    -\bigl(\bfD\bfA(t)+\bfA(t)\bfD^\top\bigr)
    +
    \bfG,
\end{equation}
where $\bfD$ denotes the one-dimensional periodic second-difference matrix
representing $-\frac12\partial_{xx}$ and $\bfG$ contains the sampled values of
the source term \eqref{eq:num_stiff_heat_source}.

The low-rank initial value is written in the form
$
    \bfY_0=\bfU_0\bfS_0\bfV_0^\top,
$
where the factors are constructed from sine modes and represent the nodal values
of \eqref{eq:num_stiff_heat_initial}. Although the implementation embeds this
initial value in a larger low-rank basis, the nonzero part corresponds to the
rank-one function $\sin(x)\sin(y)$.

For the time integration, we focus only on the proposed BUG-$p$ integrator as presented in Section~\ref{sec:BUG-p-intro}. In contrast to the
non-stiff Schr\"odinger experiment, the reduced evolution equations are solved
with stiffly stable exponential time integrators for the coefficient update. A comparison to the explicit Runge--Kutta BUG method of
\cite{nobile2025robust} is not included in this stiff test, since its explicit
time stepping is subject to the parabolic stability restriction induced by
\eqref{eq:num_stiff_heat_matrix_ode}. The reference solution is obtained by
integrating the full matrix equation with an order $8$ full-order
solver, and errors are measured at the final time $T$ using \eqref{eq:final_time_error}. 

 {To showcase the flexibility of the BUG-$p$ integrator, we construct the basis according to Ketcherson's low-storage method of order $6$, which in projected form reads
\begin{align*}
\bfY^{(0)} &= \bfY^n, \\[4pt]
\bfY^{(1)} &= [\![\bfY^{(0)} + \tfrac16 h\, \bfP(\bfY^{(0)})\bfF(\bfY^{(0)})]\!], \\
\bfY^{(i+1)} &= [\![\bfY^{(i)} + \tfrac16 h\, \bfP(\bfY^{(i)})\bfF(\bfY^{(i)})]\!], \qquad i = 1,2,3, \\[4pt]
\bfY^{(5)} &= [\![\tfrac35 \bfY^{(0)} + \tfrac25 \Big(\bfY^{(4)} + \tfrac16 h\, \bfP(\bfY^{(4)})\bfF(\bfY^{(4)})\Big)]\!], \\
\bfY^{(i+1)} &= [\![\bfY^{(i)} + \tfrac16 h\, \bfP(\bfY^{(i)})\bfF(\bfY^{(i)})]\!], \qquad i = 5,6,7,8, \\[6pt]
\bfY^{n+1} &= \frac1{25}\bfY^{n}+\frac9{25}\Big(\bfY^{(4)} + \tfrac16 h\, \bfP(\bfY^{(4)})\bfF(\bfY^{(4)})\Big)+\frac35\Big(\bfY^{(9)} + \tfrac16 h\, \bfP(\bfY^{(9)})\bfF(\bfY^{(9)})\Big).
\end{align*}
Following the discussion in Section~\ref{sec:BUG-p-intro}, the basis is thus given as
 \begin{align*}
    \wt\bfU =\,& \text{orth}([\bfU_n, \bfU^{(4)}, \bfF(\bfY^{(4)})\bfV^{(4)}, \bfU^{(9)}, \bfF(\bfY^{(9)})\bfV^{(9)}\}\,,\\
    \wt\bfV =\,& \text{orth}([\bfV_n, \bfV^{(4)}, \bfF(\bfY^{(4)})^{\top}\bfU^{(4)}, \bfU^{(9)}, \bfF(\bfY^{(9)})^{\top}\bfU^{(9)}\}\,.
\end{align*}  
This gives an order $6$ method with only a rank $5r$ basis. While the basis construction requires the computation of several substeps and singular value decompositions, the costs of the $S$-step are thus minimized. This becomes relevant when the $S$-step computation is expensive, e.g., due to the use of implicit or exponential methods. Here, we employ a sixth-order exponential method to evolve the $\bfS$ matrix. Relative errors can be found in Figure~\ref{fig:num_nonstiff_schroedinger_bugp} together with the corresponding efficiency in Figure~\ref{fig:num_nonstiff_schroedinger_parallel_bugp}.  While the sixth-order BUG integrator does not exhibit clean order six convergence, we underline that the used exponential integrator also does not lead to a clean convergence order for the full rank problem. Moreover, we wish to again underline that this test case is not covered by our numerical analysis, since it does not fulfill our main assumptions used in the numerical analysis.
}

\begin{figure}[t]
    \centering

    \begin{subfigure}[t]{0.49\textwidth}
        \centering
        \includegraphics[width=\textwidth]{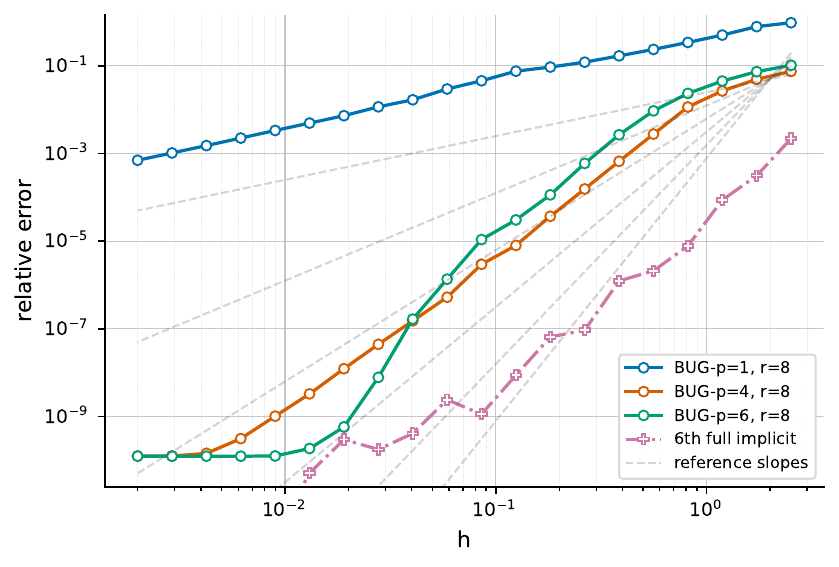}
        \caption{Convergence for the stiff heat equation
    \eqref{eq:num_stiff_heat_pde}. The panels show the
    final-time relative error for the proposed BUG-$p$
    method compared to the full-rank system integration.}
        \label{fig:num_nonstiff_schroedinger_bugp}
    \end{subfigure}
    \hfill
    \begin{subfigure}[t]{0.49\textwidth}
        \centering
        \includegraphics[width=\textwidth]{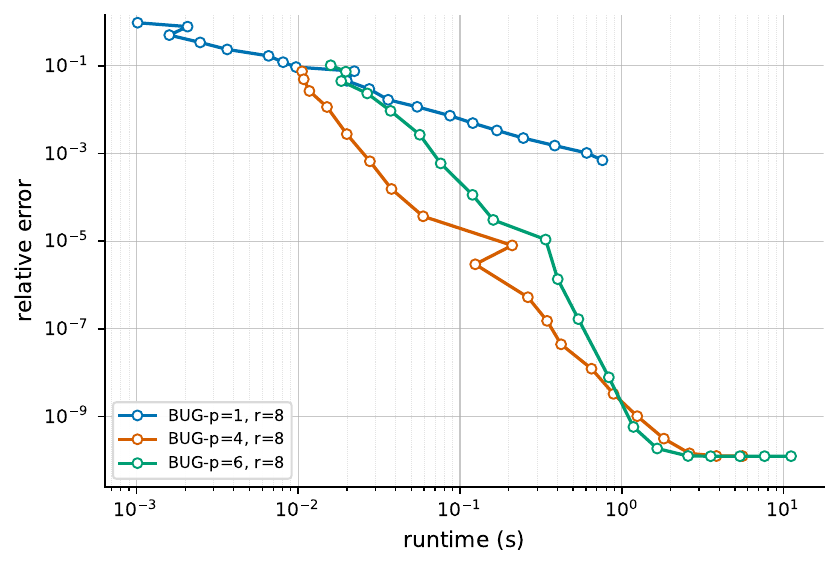}
        \caption{Efficiency of the BUG-p integrators of different order. Each marker denotes one time-step choice.  BUG-p=4 has the best error-to-runtime inflection point.}
        \label{fig:num_nonstiff_schroedinger_parallel_bugp}
    \end{subfigure}
    \label{fig:num_stiff_heat}
\end{figure}

\section{Summary and conclusion}\label{sec:summary_conculsion}

 {We have introduced a framework for constructing basis-update \& Galerkin (BUG) integrators of arbitrary order $p$ for dynamical low-rank approximation. Unlike previous general-order approaches, our integrators do not tie the coefficient update to the explicit Runge-Kutta scheme used for basis construction: the auxiliary RK method serves only to generate high-order basis directions, while the reduced $S$-, $K$-, and $L$-step equations may be integrated with any scheme of matching or higher order, including implicit or exponential methods. This is, to our knowledge, the first general-order BUG construction compatible with stiffly stable time integration.

We derived two variants of this construction -- an augmented BUG-p integrator and a parallel BUG-p integrator -- and proved a robust, curvature-independent error bound of order p for both (Theorem~\ref{th:robusterrorp}), under standard Lipschitz, boundedness, and small-curvature assumptions. A key technical ingredient (Lemma~\ref{le:approxU}) shows that the RK stage information needed for a $p$-th order basis is already contained in a basis of rank $(s+1)r$ for the augmented method and $sr$ for the parallel method, which is smaller by roughly a factor of two than the $2rs^*$ basis required by the explicit RK-BUG integrator of \cite{nobile2025robust}, at comparable or better accuracy.

Numerically, we confirmed the predicted convergence rates for both integrators on a non-stiff Schrödinger benchmark, and showed that decoupling the coefficient integrator from the basis-generating RK method can be exploited in two distinct ways: to reduce the error constant, and, when the coefficient integrator has higher order than the basis-generating RK method, to increase the observed convergence rate beyond the integrator's nominal order. We further demonstrated on a stiff forced heat equation that an order-six augmented BUG integrator, using a six-stage low-storage RK basis construction paired with an exponential S-step integrator, remains stable and convergent at large time steps where the underlying assumptions of Theorem 1 are not formally satisfied. Since a rigorous error bound for the stiff regime remains open, our numerical evidence should be read as an empirical, not proven, result.

The formulation of general conditions on the basis construction opens the door to the construction of further BUG-$p$ integrators, e.g., through multistep methods which could especially become relevant for the parallel BUG-$p$ integrator.}

\section{Acknowledgements}
This material is based upon work supported by the U.S. Department of Energy, Office of Science, Office of Advanced Scientific Computing Research, as part of their Applied Mathematics Research Program. The work was performed at the Oak Ridge National Laboratory, which is managed by UT-Battelle, LLC under Contract No. De-AC05-00OR22725. The United States Government retains and the publisher, by accepting the article for publication, acknowledges that the United States Government retains a non-exclusive, paid-up, irrevocable, world-wide license to publish or reproduce the published form of this manuscript, or allow others to do so, for the United States Government purposes. The Department of Energy will provide public access to these results of federally sponsored research in accordance with the DOE Public Access Plan (http://energy.gov/downloads/doe-public-access-plan).


	\bibliographystyle{abbrv}
	\bibliography{main} 

\appendix
\newpage
\section{Notation}\label{tab:notation}

\begin{longtable}{p{0.33\linewidth} p{0.63\linewidth}}
\caption{Core notation used throughout the manuscript.}
\label{tab:notation}
\\
\hline
Symbol & Meaning \\
\hline
\endfirsthead

\multicolumn{2}{l}{\small\textit{Table \thetable{} continued from previous page}} \\
\hline
Symbol & Meaning \\
\hline
\endhead

\hline
\multicolumn{2}{r}{\small\textit{Continued on next page}} \\
\endfoot

\hline
\endlastfoot

\multicolumn{2}{l}{\textit{Continuous problem and low-rank manifold}}\\
\hline
$\bfA(t)\in\R^{m\times n}$ 
& Exact/full-rank solution of $\dt{\bfA}(t)=\bfF(t,\bfA(t))$. \\

$\bfF(t,\bfZ)$
& Matrix-valued right-hand side evaluated at time $t$ and state $\bfZ$. \\

$\bfX(t)=\bfU(t)\bfS(t)\bfV(t)^\top$
& Generic rank-$r$ dynamical low-rank approximation. \\

$\mathcal{M}_r$
& Manifold of matrices of rank $r$. \\

$\bfP(\bfX)$
& Orthogonal projector onto the tangent space of $\mathcal{M}_r$ at $\bfX=\bfU\bfS\bfV^\top$. \\

$(\bfI-\bfP(\bfY))\bfF(t,\bfY)$
& Normal component of the vector field at $\bfY$; assumed to be $O(\eps_r)$. \\

$L,\;B,\;\eps_r,\;\delta$
& Lipschitz constant, uniform bound for $\bfF$, bounds on the normal component, and initial error bound. \\

$p$
& Target order of convergence of the BUG-$p$ integrator. \\

$s,\;s^*$
& Number of stages of the underlying explicit Runge--Kutta method; number of stages with $b_i\neq0$. \\

$\mu = h(h^p+\eps_r)$
& Combined local error scale used throughout the error analysis. \\

$\bfR(t) = \bfA(t)-\bfU\bfU^\top\bfA(t)\bfV\bfV^\top$
& Residual (normal-space) part of the exact solution with respect to a given basis pair $(\bfU,\bfV)$. \\

$\bfD(t)$
& Defect term $\bfF(t,\bfU\widetilde\bfS(t)\bfV^\top+\bfR(t))-\bfF(t,\bfU\widetilde\bfS(t)\bfV^\top)$ used in the Gronwall argument. \\

\hline
\multicolumn{2}{l}{\textit{Discrete low-rank solution and truncation}}\\
\hline
$t_0,\;t_1=t_0+h$
& Initial time, next time level. \\

$h$
& Time step size. \\

$\bfY_n=\bfU_n\bfS_n\bfV_n^\top$
& Numerical low-rank approximation at time $t_n$; $\bfU_n$ and $\bfV_n$ have orthonormal columns. \\

$r$
& Current rank. \\

$[\![\bfZ]\!], [\![\bfZ]\!]_{r_n}, [\![\bfZ]\!]_{\vartheta}$
& Truncation of $\bfZ$ to rank $r$, to a specific $r_n$, with truncation tolerance $\vartheta$. \\

$\vartheta$
& Truncation error introduced in the final compression step. \\

${\rm orth}(\cdot)$
& Orthonormal basis of the column span of the supplied matrix block. \\

\hline
\multicolumn{2}{l}{\textit{Augmented and parallel BUG integrators}}\\
\hline
$\overline{\bfU},\;\overline{\bfV}$
& Augmented bases used in the augmented BUG integrator. \\

$\overline{\bfS}(t)$
& Reduced coefficient matrix in the augmented BUG Galerkin step and the Parallel BUG coefficient step. \\

$\wh\bfU_0,\;\wh\bfV_0$
& Pre-augmented bases in the parallel BUG integrator. \\

$\bfK(t),\;\bfL(t)$
& Solution matrices of the parallel $K$- and $L$-steps. \\

$\wh\bfU=[\wh\bfU_0,\widetilde{\bfU}_2],\;
 \wh\bfV=[\wh\bfV_0,\widetilde{\bfV}_2]$
& Final augmented bases after the parallel basis-update step. \\

$\bfM=\overline\bfU^\top\bfU_n,\;\bfN=\overline\bfV^\top\bfV_n$
& Projection matrices mapping the old basis into the augmented basis (augmented BUG integrator). \\

$\wh \bfM=\wh\bfU_0^\top\bfU_n,\;\wh \bfN=\wh\bfV_0^\top\bfV_n$
& Projection matrices mapping the old basis into the pre-augmented basis (parallel BUG integrator). \\

$\bfS_{n+1}^{\rm par}$
& Assembled coefficient matrix of the parallel integrator in the final augmented basis $(\wh\bfU,\wh\bfV)$. \\

\hline
\multicolumn{2}{l}{\textit{Runge--Kutta stage notation for the order-$p$ construction}}\\
\hline
$a_{\ell,\ell'},\;b_\ell,\;c_\ell$
& Butcher coefficients of the underlying explicit $s$-stage Runge--Kutta method. \\

$t_{(\ell)}=t_k+c_\ell h$
& Time associated with stage $\ell$. \\

$\bfY_{(\ell)}=\bfU_{(\ell)}\bfS_{(\ell)}\bfV_{(\ell)}^\top$
& Projected low-rank stage value. \\

$\bfF_{(\ell)}=\bfF(t_{(\ell)},\bfY_{(\ell)})$
& Right-hand side evaluated at stage $\ell$. \\

$\bar{\bfZ}_\ell=\bfP(\bfY_{(\ell)})\bfF_{(\ell)}$
& Projected stage increment (tangent vector). \\

$\widetilde \bfZ_{\ell}, \widetilde \bfZ_{\ell}^{(S)}$
& Stage increment of the $K$-step and $S$-step. \\

$\bfY_1^{(K)}, \bfY_1^{(S)}$
& Updated $K$ and $S$-step solutions using an order $p$ RK integrator. \\

$\wh\bfU_{(j)},\;\wh\bfV_{(j)}$
& Recursively augmented bases collecting stage information up to stage $j$. \\

$\bfZ_j,\;\bfA_1$
& Full-rank (unprojected) $j$-th Runge--Kutta stage increment and endpoint value, used only in the error analysis. \\

\end{longtable}

\section{Recap: High-order BUG integrators}\label{app:recap}

Second--order BUG integrators have recently been derived in \cite{ceruti2024robust} {as an extension to the augmented BUG integrator and in \cite{kusch2024second} as an extension to the parallel BUG integrator}. The core idea of these integrators is to use the augmentation steps developed in~\cite{CeKL22} to augment the basis with higher-order information. In the following, we restate the parallel and augmented BUG integrators. Our presentation focuses on a time update from an initial time $t_0$ to the next time step $t_1 = t_0 + h$, where $h$ denotes the user-determined time step size. We denote the factorized numerical solution at time $t_n$ as $\bfY_n = \bfU_n\bfS_n\bfV_n^{\top}$ where $n$ can take real values, e.g., $n=1/2$ to denote the numerical solution at the half-step $t_{1/2} = t_0 + h/2$.

\subsection{Midpoint BUG integrator}
The midpoint BUG integrator \cite{ceruti2024robust} first adds midpoint information, followed by a Galerkin update. More specifically, the integrator computes a step with step size $h/2$ with the (first-order) augmented BUG integrator (sometimes also referred to as the rank-adaptive BUG integrator) of \cite{ceruti2024robust} without the truncation step to compute an approximation of the temporal midpoint $t_{1/2}$ with rank $r \le\wh r \le 2r$,
$$
\wh \bfY_{1/2}=\wh\bfU_\h \wh\bfS_\h \wh\bfV_\h^\top \approx \bfA(t_\h).
$$ 
If $\bfY_{1/2}$ is truncated to rank $r$, this yields the rank $3r$ variant of the midpoint BUG integrator. Without truncation, the resulting integrator will have rank $4r$ basis matrices. Given this midpoint approximation, an augmented basis is constructed as
\begin{equation}\label{eq:Ubar-Vbar}
\begin{aligned}
\wt \bfU &= {\rm orth}(\wh\bfU_\h, h\bfF(t_\h,\wh \bfY_\h) \wh\bfV_\h) \ \hbox{ and }\\
\wt \bfV &= {\rm orth}(\wh\bfV_\h, h\bfF(t_\h,\wh\bfY_\h)^\top  \wh\bfU_\h)\,,
\end{aligned}
\end{equation}
where ${\rm orth}$ denotes an orthonormalization with, e.g., a Gram-Schmidt algorithm. The integrator then takes the following form (for the rank $4r$ variant):

\begin{enumerate}
\item {\bf Basis augmentation:} Compute the augmented orthonormal bases $\wt \bfU$ and $\wt \bfV$ of rank $\wt r \le 4r$ according to \eqref{eq:Ubar-Vbar} and the $\wt r \times r$ matrices $\wt \bfM=\wt \bfU^\top \bfU_0$ and $\wt\bfN=\wt \bfV^\top \bfV_0$.
\item {\bf Galerkin step:} Integrate, from $t=t_0$ to~$t_1$, the $\wt r \times \wt r$ matrix differential equation
		\begin{equation}\label{bar-S-step} 
		\dt{\wt\bfS}(t) =  \wt\bfU^\top \bfF(t, \wt\bfU \, \wt\bfS(t) \wt\bfV^\top) \wt\bfV, 
		\qquad \wt\bfS(t_0) = \wt\bfM \bfS_0 \wt\bfN^\top.
		\end{equation}
\item \textbf{Truncate}: Truncate the factorized solution to a new rank $r_1$, see \cite{ceruti2023rank}.
\end{enumerate}

\subsection{Second--order parallel integrator}
The second--order parallel integrator is constructed as an extension to the first--order variant \cite{CeKL23}, where instead of the basis at the old time step, the integrator uses the pre-augmented bases
\begin{align}\label{eq:pre-aug-bases}
    \wh\bfU_0 = \text{orth}([\bfU_0, \bfF(t_0,\bfY_0)\bfV_0]) \enskip\text{ and }\enskip \wh\bfV_0 = \text{orth}([\bfV_0, \bfF(t_0,\bfY_0)^{\top}\bfU_0])\,.
\end{align}
The integrator then takes the following form:

\begin{enumerate}
\item {\bf Pre-augmentation:} Construct the pre-augmented basis matrices $\wh\bfU_0\in \mathbb{R}^{m\times 2r}$ and $\wh\bfV_0\in \mathbb{R}^{n\times 2r}$ according to \eqref{eq:pre-aug-bases}.
\item {\bf Parallel update:} Determine the augmented basis matrices $\widehat{\mathbf{U}}\in \mathbb{R}^{m\times 4r}$ and $\widehat{\mathbf{V}}\in \mathbb{R}^{n\times 4r}$ as well as the coefficient matrix $\bar{\mathbf{S}}(t_1) \in\mathbb{R}^{2r \times 2r}$ (in parallel):
\\[2mm]
\textbf{K-step}: Integrate from $t=t_0$ to $t_1$ the $m \times 2r$ matrix differential equation
\begin{align*}
\dt{\bfK}(t) =\,& \bfF(t,\bfK(t)\widehat \bfV_0^{\top})\widehat \bfV_0\, , \quad \bfK(t_0) = \bfU_0 \bfS_0 \bfV_0^{\top}\widehat{\bfV}_0\, .
\end{align*}
Construct $\widehat \bfU= [\widehat \bfU_0, \widetilde \bfU_2] = \text{orth}([\widehat \bfU_0, \bfK(t_1)])$. 
\\[2mm]
\textbf{L-step}: Integrate from $t=t_0$ to $t_1$ the $n \times 2r$ matrix differential equation
\begin{align*}
\dt{\bfL}(t) =\,& \bfF(t,\widehat \bfU_0 \bfL(t)^{\top})^{\top}\widehat \bfU_0\, , \quad \bfL(t_0) = \bfV_0 \bfS_0^{\top} \bfU_0^{\top}\widehat{\bfU}_0\, .
\end{align*}
Construct $\widehat \bfV= [\widehat \bfV_0, \widetilde \bfV_2] = \text{orth}([\widehat \bfV_0, \bfL(t_1)])$. 
\\[2mm]
\textbf{S-step}: Solve the $2r \times 2r$ matrix differential equation
\begin{align*}
\dt{\bar \bfS}(t) = \widehat \bfU_0^{\top}\bfF(t,\widehat{\bfU}_0\bar \bfS(t)\widehat{\bfV}_0^{\top})\widehat \bfV_0\, , \quad \bar \bfS(t_0) = \widehat{\bfU}_0^{\top}\bfU_0 \bfS_0 \bfV_0^{\top}\widehat{\bfV}_0\, .
\end{align*}
\item \textbf{Augment}: Set up the augmented coefficient matrix $\widehat{\bfS}_1 \in\mathbb{R}^{4r\times 4r}$ as
\begin{align*}
\widehat{\bfS}_1 = \begin{pmatrix}
\bar{\bfS}(t_1) & \bfL(t_1)^{\top}\widetilde \bfV_2\\
\widetilde \bfU_2^{\top} \bfK(t_1) & \bf0
\end{pmatrix}\, .
\end{align*}
\item \textbf{Truncate}: Truncate the factorized solution to a new rank $r_1$, see \cite{ceruti2023rank}.
\end{enumerate}

\end{document}